\newtheorem{theorem}{Theorem}[section]
\newtheorem{lemma}[theorem]{Lemma}
\newtheorem{proposition}[theorem]{Proposition}
\newtheorem{corollary}[theorem]{Corollary}
\theoremstyle{definition}
\theoremstyle{remark}
\newtheorem{remark}[theorem]{Remark}
\theoremstyle{example}
\theoremstyle{note}
\numberwithin{equation}{section}
\DeclareMathOperator{\Hom}{Hom}
\DeclareMathOperator{\GL}{GL}
\DeclareMathOperator{\Ker}{Ker}
\DeclareMathOperator{\Image}{Im}
\DeclareMathOperator{\Span}{Span}
\DeclareMathOperator{\ind}{ind}
\DeclareMathOperator{\M}{M}
\DeclareMathOperator{\tr}{Tr}
\DeclareMathOperator{\Rank}{Rank}
\begin{document}
\title{On a twisted Jacquet module of $\GL(2n)$ over a finite field}
\author{Kumar Balasubramanian$^{*}$} \thanks{* indicates corresponding author}
\thanks{Research of Kumar Balasubramanian is supported by the SERB grant: MTR/2019/000358.}
\address{Kumar Balasubramanian\\
Department of Mathematics\\
IISER Bhopal\\
Bhopal, Madhya Pradesh 462066, India}
\email{bkumar@iiserb.ac.in}

\author{Abhishek Dangodara}
\address{Abhishek Dangodara\\
Department of Mathematics\\
IISER Bhopal\\
Bhopal, Madhya Pradesh 462066, India}
\email{abhishek17@iiserb.ac.in}

\author{Himanshi Khurana}
\address{Himanshi Khurana\\
Department of Mathematics\\
IISER Bhopal\\
Bhopal, Madhya Pradesh 462066, India}
\email{himanshi18@iiserb.ac.in}

\keywords{Cuspidal representations, Twisted Jacquet module}
\subjclass{Primary: 20G40}

\maketitle

\begin{abstract} Let $F$ be a finite field and $G=\GL(2n,F)$. In this paper, we explicitly describe a certain twisted Jacquet module of an irreducible cuspidal representation of $G$. 
\end{abstract}

\section{Introduction}

Let $F$ be a finite field and $G=\GL(n,F)$. Let $P$ be a parabolic subgroup of $G$ with Levi decomposition $P=MN$. Let $\pi$ be any irreducible finite dimensional complex representation of $G$ and $\psi$ be an irreducible representation of $N$. Let $\pi_{N, \psi}$ be the sum of all irreducible representations of $N$ inside $\pi$, on which $\pi$ acts via the character $\psi$. It is easy to see that $\pi_{N, \psi}$ is a representation of the subgroup $M_{\psi}$ of $M$, consisting of those elements in $M$ which leave the isomorphism class of $\psi$ invariant under the inner conjugation action of $M$ on $N$. The space $\pi_{N, \psi}$ is called the \textit{twisted Jacquet module} of the representation $\pi$. It is an interesting question to understand for which irreducible representations $\pi$, the twisted Jacquet module $\pi_{N, \psi}$ is non-zero and to understand its structure as a module for $M_{\psi}$. \\ 

In \cite{KumHim},\cite{KumHimGL6}, inspired by the work of Prasad in \cite{Dip[1]}, we studied the structure of a certain twisted Jacquet module of a cuspidal representation of $\GL(4,F)$ and $\GL(6,F)$. Based on our calculations, we had conjectured the structure of the module for $\GL(2n,F)$ (see Section 1 in \cite{KumHimGL6}). For a more detailed introduction and the motivation to study the problem, we refer the reader to Section 1 in \cite{KumHim}.\\

Before we state our result, we set up some notation. Let $F$ be a finite field and $F_n$ be the unique field extension of $F$ of degree $n$. Let $G=\GL(2n,F)$ and $P=MN$ be the standard maximal parabolic subgroup of $G$ corresponding to the partition $(n,n)$. We have, $M \simeq \GL(n,F) \times \GL(n,F)$ and $N \simeq \M(n,F)$. We let $\pi=\pi_{\theta}$ to be an irreducible cuspidal representation of $G$ associated to the regular character $\theta$. Let $\psi$ be any character of $N \simeq \M(n,F)$ and $\psi_0$ be a fixed non-trivial character of $F$. We let 
\[ A_i =\begin{bmatrix} I_i & 0\\
0 & 0 \end{bmatrix} \in \M(n,F).\]
Let $\psi_A : N \to \mathbb{C}^{\times}$ be the character given by 
\[\psi_A \left ( \begin{bmatrix} 1 & X\\
0 & 1 \end{bmatrix} \right)=\psi_0(\tr(AX)).\]

Let $H_A=M_1 \times M_2$ where $M_1$ is the Mirabolic subgroup of $\GL(n,F)$ and $M_2=w_0{M_1}^T{w_0}^{-1}$ where 
\[ w_0 =\begin{bmatrix} 0 & \hdots & 1\\
\vdots & \adots & \vdots \\
1 & \hdots & 0
\end{bmatrix}\]
Let $U$ be the subgroup of unipotent matrices in $\GL(2n,F)$ and $U_A=U \cap H_A$. Then, we get $U_A \simeq U_1 \times U_2$ where $U_1$ and $U_2$ are the upper triangular unipotent subgroups of $\GL(n,F)$. For $k=1,2$, let $\mu_k : U_k \to \mathbb{C}^{\times}$ be the non-degenerate character of $U_k$ given by 
\[\mu_{k} \left ( \begin{bmatrix}
 1 & x_{12} & x_{13} & \hdots & x_{1,n}\\
  &   1   & x_{23} & \hdots & x_{2,n} \\
 & & 1 & \ddots & \vdots \\
 & & & \ddots & x_{(n-1),n}\\
 & & & & 1 \end{bmatrix} \right )= \psi_0(x_{12}+ x_{23}+ \cdots + x_{(n-1),n}).\]
Let $\mu: U_{A}\rightarrow \mathbb{C}^{\times}$ be the character of $U_{A}$ given by \[\mu(u)=\mu_{1}(u_{1})\mu_{2}(u_{2})\] where $u= \begin{bmatrix} u_{1} & 0 \\ 0 & u_{2}\end{bmatrix} \in U_A$. \\

\begin{theorem}
Let $\theta$ be a regular character of $F_n^{\times}$ and $\pi=\pi_{\theta}$ be an irreducible cuspidal representation of $G$. Then 
\[\pi_{N,\psi_A} \simeq \theta|_{F^{\times}} \otimes \ind_{U_A}^{H_A}\mu\]
as $M_{\psi_A}$ modules.

\end{theorem}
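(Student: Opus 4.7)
The overall plan is to exhibit an explicit $M_{\psi_A}$-equivariant map from $\pi_{N,\psi_A}$ to the right-hand side and then show it is an isomorphism by a dimension count. The key idea is to iterate the twisted Jacquet functor by taking further $(U_A,\mu)$-coinvariants, land in the one-dimensional Whittaker model of $\pi_\theta$, and then apply Frobenius reciprocity for the pair $U_A \subset H_A$.

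Concretely, set $V = NU_A$ and let $\Psi$ be the character of $V$ obtained by combining $\psi_A$ on $N$ with $\mu$ on $U_A$. A direct block-matrix calculation should identify $(V,\Psi)$ with (a $G$-conjugate of) the standard Whittaker datum on the upper-triangular unipotent of $\GL(2n,F)$. Since $\pi_\theta$ is cuspidal with regular parameter $\theta$, its Whittaker model is one-dimensional with central character $\theta|_{F^\times}$, and hence
\[
(\pi_{N,\psi_A})_{U_A,\mu} \;=\; \pi_{V,\Psi} \;\simeq\; \theta|_{F^\times}.
\]
By Frobenius reciprocity, the canonical surjection onto the $(U_A,\mu)$-coinvariants yields an $H_A$-equivariant map
\[
\Phi : \pi_{N,\psi_A} \;\longrightarrow\; \ind_{U_A}^{H_A}\big((\pi_{N,\psi_A})_{U_A,\mu}\big) \;\simeq\; \theta|_{F^\times}\otimes \ind_{U_A}^{H_A}\mu,
\]
and the $F^\times$-compatibility coming from the central character of $\pi_\theta$ makes $\Phi$ equivariant for the full $M_{\psi_A}$-action.

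To conclude that $\Phi$ is an isomorphism, I would match dimensions. Using the Deligne--Lusztig / Green character formula for $\pi_\theta$,
\[
\dim \pi_{N,\psi_A} \;=\; \frac{1}{|N|}\sum_{X \in \M(n,F)}\chi_{\pi_\theta}\!\left(\begin{bmatrix} I & X\\ 0 & I\end{bmatrix}\right)\overline{\psi_0(\tr(AX))}.
\]
Evaluating this sum by grouping the $X$'s into $M$-orbits should give $[H_A : U_A]$, matching $\dim (\theta|_{F^\times}\otimes \ind_{U_A}^{H_A}\mu)$. Combined with injectivity of $\Phi$ (a consequence of the uniqueness of the Whittaker functional on $\pi_\theta$), $\Phi$ is forced to be an isomorphism.

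The hardest step is the dimension computation. Evaluating the above exponential sum requires exploiting the regularity of $\theta$ in a Deligne--Lusztig style argument and carefully enumerating $M$-orbits on $\M(n,F)$. In the $\GL(4)$ and $\GL(6)$ cases of \cite{KumHim, KumHimGL6}, this was carried out by hand; for general $\GL(2n)$, a uniform combinatorial argument is required, which is the main technical content of the proof. A parallel issue is a clean verification of injectivity of $\Phi$ via a Mackey / double-coset analysis of the pair $(NM_{\psi_A}, U_A H_A)$ inside $G$, which must be organized so as not to depend on the specifics of small $n$.
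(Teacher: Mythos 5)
Your approach is genuinely different in organization from the paper's, though it leans on the same three pillars: the observation $U = N U_A$ with $\psi = \psi_A\otimes\mu$, the multiplicity-one/Whittaker uniqueness for $\pi_\theta$, and a dimension count for $\pi_{N,\psi_A}$. Where you construct $\Phi$ directly by iterating the twisted Jacquet functor and then applying Frobenius reciprocity for $U_A\subset H_A$, the paper works ``top-down'': it decomposes $\ind_U^{P_{\psi_A}}\psi$ into the explicitly constructed irreducibles $\sigma_\chi$ (one for each central character $\chi$ of $F^\times$), shows $\Hom_{P_{\psi_A}}(\pi|_{P_{\psi_A}},\sigma_\chi)=0$ whenever $\chi\neq\theta|_{F^\times}$ by a central-character orthogonality, and then identifies $\Hom_G(\pi,\ind_U^G\psi)$ with $\Hom_{M_{\psi_A}}(\pi_{N,\psi_A},\rho)$. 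The two routes are essentially Frobenius reciprocity applied at different rungs of the chain $U\subset P_{\psi_A}\subset G$, and both ultimately exploit irreducibility of the Kirillov representation $\ind_{U_A}^{H_A}\mu\simeq\rho_1\otimes\rho_2$.

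There is, however, a genuine gap in how you close the argument. You claim injectivity of $\Phi$ is ``a consequence of the uniqueness of the Whittaker functional on $\pi_\theta$,'' but Whittaker uniqueness only tells you $(\pi_{N,\psi_A})_{U_A,\mu}$ is one-dimensional, i.e.\ $\Phi\neq 0$; it does not rule out a nonzero $H_A$-submodule of $\pi_{N,\psi_A}$ with trivial $(U_A,\mu)$-coinvariants sitting in $\ker\Phi$. The argument that actually works --- and is implicit in the paper --- is: $\Phi\neq 0$; the target $\ind_{U_A}^{H_A}\mu$ is irreducible (it is $\rho_1\otimes\rho_2$ with each factor the Kirillov representation of the Mirabolic subgroup, Theorem~\ref{Kirillov representation}); hence $\Phi$ is surjective; and then $\dim\pi_{N,\psi_A}=[H_A:U_A]=(q;q)_{n-1}^2$ forces $\Phi$ to be an isomorphism. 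So the dimension computation is not merely a sanity check --- it is logically indispensable for injectivity. You rightly flag it as the technical core; the paper carries it out in Section~3 by stratifying $\M(n,F)$ simultaneously by rank and by the value of $\tr(AX)$ (this is finer than the stratification by $M$-orbits, i.e.\ by rank alone, because $\tr(AX)$ is not $M$-invariant), and then collapsing the resulting sum with Lemma~\ref{M(n,n,r,q)} and the $q$-Pochhammer identity $\sum_{r\ge 0}\M(n,n,r,q)(q;q)_{a-r}=q^{n^2}(q;q)^2_{a-n}/(q;q)_{a-2n}$ from \cite{HazGor}.
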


\section{Preliminaries}
In this section, we mention some preliminary results that we need in our paper.

\subsection{Character of a Cuspidal Representation} Let $F$ be the finite field of order $q$ and $G=\GL(m,F)$. Let $F_m$ be the unique field extension of $F$ of degree $m$. A character $\theta$ of $F^{\times}_{m}$ is called a ``regular'' character, if under the action of the Galois group of $F_{m}$ over $F$, $\theta$ gives rise to $m$ distinct characters of $F^{\times}_{m}$. It is a well known fact that the cuspidal representations of $\GL(m,F)$ are parametrized by the regular characters of $F_{m}^{\times}$. To avoid introducing more notation, we mention below only the relevant statements on computing the character values that we have used. We refer the reader to Section 6 in \cite{Gel[1]} for more precise statements on computing character values.

\begin{theorem}\label{character value of cuspidal representation (Gelfand)}
Let $\theta$ be a regular character of $F^{\times}_{m}$. Let $\pi=\pi_{\theta}$ be an irreducible cuspidal representation of $\GL(m,F)$ associated to $\theta$. Let $\Theta_{\theta}$ be its character. If $g\in \GL(m,F)$ is such that the characteristic polynomial of $g$ is not a power of a polynomial irreducible over $F$. Then, we have  \[\Theta_\theta(g)=0. \]
\end{theorem}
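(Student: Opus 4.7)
The plan is to use the conjugation action of the unipotent radical of a proper parabolic subgroup on $g$ to convert the class-function identity $\sum_{n} \Theta_\theta(n^{-1}gn) = |N|\,\Theta_\theta(g)$ into a sum of the form $\sum_{n} \Theta_\theta(gn)$, which vanishes by cuspidality of $\pi_\theta$. So the proof reduces to two things: a reduction via rational canonical form, and a coprimality computation that turns one sum into the other.

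First I would use the hypothesis on the characteristic polynomial. If $\chi_g$ is not a power of a single irreducible polynomial over $F$, then $\chi_g$ factors as $\chi_g = f_1 f_2$ with $f_1, f_2 \in F[x]$ coprime and non-constant, of degrees $k$ and $m-k$ for some $1 \le k \le m-1$. By rational canonical form, $g$ is conjugate in $\GL(m,F)$ to a block-diagonal matrix $g_0 = \begin{pmatrix} g_1 & 0 \\ 0 & g_2 \end{pmatrix}$ with $g_1 \in \GL(k,F)$ and $g_2 \in \GL(m-k,F)$ having characteristic polynomials $f_1$ and $f_2$. Since $\Theta_\theta$ is a class function, it is enough to show $\Theta_\theta(g_0)=0$, so I may assume $g = g_0$ lies in the Levi $M = \GL(k,F) \times \GL(m-k,F)$ of the proper standard parabolic $P=MN$ of type $(k, m-k)$, with $N$ consisting of matrices $n_X = \begin{pmatrix} I_k & X \\ 0 & I_{m-k} \end{pmatrix}$, $X \in \M_{k \times (m-k)}(F)$.

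Next I would exploit the coprimality. A direct matrix computation gives
\[
n_X^{-1} g n_X \;=\; g \cdot n_{X - g_1^{-1} X g_2}.
\]
The $F$-linear map $T : X \mapsto X - g_1^{-1} X g_2$ on $\M_{k \times (m-k)}(F)$ has kernel $\{X : g_1 X = X g_2\}$, which is zero because $g_1$ and $g_2$ have coprime characteristic polynomials (Sylvester's intertwining lemma). Hence $T$, and therefore the map $n_X \mapsto n_{T(X)}$, is a bijection of $N$ onto itself. Combining this with the class-function property of $\Theta_\theta$:
\[
|N|\,\Theta_\theta(g) \;=\; \sum_{n \in N}\Theta_\theta(n^{-1} g n) \;=\; \sum_{X} \Theta_\theta\!\bigl(g \cdot n_{T(X)}\bigr) \;=\; \sum_{n' \in N}\Theta_\theta(g n').
\]

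Finally I would invoke cuspidality. Since $\pi=\pi_\theta$ is cuspidal, its (untwisted) Jacquet module along $N$ is zero, i.e.\ $\sum_{n \in N}\pi(n) = 0$ as an operator on the representation space. Left-multiplying by $\pi(g)$ and taking traces gives $\sum_{n' \in N}\Theta_\theta(g n') = 0$, and the previous display then forces $\Theta_\theta(g)=0$. The single non-routine ingredient in this plan is the bijectivity of $T$: everything else is a bookkeeping rearrangement using the class-function property and the operator-theoretic form of cuspidality. The main obstacle, if any, is simply keeping the block matrix computation and the direction of the bijection straight.
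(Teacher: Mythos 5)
Your proof is correct, and it is essentially the standard argument for this classical fact. The paper itself does not prove the statement: it refers the reader to Gel'fand's work (``See Section 6 in \cite{Gel[1]}'') and simply records it as a known characterization of cuspidal characters. Your argument --- reduce to $g$ block-diagonal in a Levi via primary decomposition of the characteristic polynomial, observe that conjugation by $N$ acts on $g$ by $n_X^{-1}gn_X = g\,n_{T(X)}$ with $T(X)=X-g_1^{-1}Xg_2$, use coprimality of the block characteristic polynomials (Sylvester) to get bijectivity of $T$, and then kill $\sum_{n\in N}\Theta_\theta(gn)$ by the vanishing of the Jacquet module along $N$ --- is precisely the textbook derivation of this vanishing from cuspidality, so there is no daylight between your route and the one the paper's citation points to. All three steps check out: the block computation $n_X^{-1}gn_X = g\,n_{X-g_1^{-1}Xg_2}$ is correct, the kernel of $T$ is $\{X : g_1X = Xg_2\} = 0$ by coprimality, and $\frac{1}{|N|}\sum_{n}\pi(n)$ is the projection onto $V^N = 0$ so $\sum_{n'}\Theta_\theta(gn')=0$.
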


\begin{theorem}\label{character value of cuspidal representation (Dipendra)} Let $\theta$ be a regular character of $F^{\times}_{m}$. Let $\pi=\pi_{\theta}$ be an irreducible cuspidal representation of $\GL(m,F)$ associated to $\theta$. Let $\Theta_{\theta}$ be its character. Suppose that $g=s.u$ is the Jordan decomposition of an element $g$ in $\GL(m,F)$. If $\Theta_{\theta}(g)\neq 0$, then the semisimple element $s$ must come from $F_{m}^{\times}$. Suppose that $s$ comes from $F_{m}^{\times}$. Let $z$ be an eigenvalue of $s$ in $F_{m}$ and let $t$ be the dimension of the kernel of $g-z$ over $F_{m}$. Then
\[\Theta_{\theta}(g)=(-1)^{m-1}\bigg[\sum_{\alpha=0}^{d-1}\theta(z^{q^{\alpha}})\bigg ](1-q^{d})(1-(q^{d})^{2})\cdots (1-(q^{d})^{t-1}). \]
where $q^{d}$ is the cardinality of the field generated by $z$ over $F$, and the summation is over the distinct Galois conjugates of $z$.
\end{theorem}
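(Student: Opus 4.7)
The plan is to prove the stated isomorphism of $M_{\psi_A}$-modules by computing and matching the characters of the two sides. For the left hand side, the standard formula for the character of a twisted Jacquet module gives, for each $m\in M_{\psi_A}$,
\[
\Theta_{\pi_{N,\psi_A}}(m)=\frac{1}{|N|}\sum_{n\in N}\Theta_{\theta}(mn)\,\overline{\psi_A(n)}.
\]
By Theorem~\ref{character value of cuspidal representation (Gelfand)} the summand vanishes unless the characteristic polynomial of $mn\in\GL(2n,F)$ is a power of a single irreducible polynomial over $F$. This is a strong restriction and, combined with the block description of $N\simeq\M(n,F)$ and the stabiliser condition $m\in M_{\psi_A}$, confines the surviving pairs $(m,n)$ to a comparatively small explicit set.

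For the surviving terms I would write the Jordan decomposition $mn=s\cdot u$ and apply Theorem~\ref{character value of cuspidal representation (Dipendra)}: the semisimple part $s$ must come from $F_{2n}^{\times}$, and $\Theta_{\theta}(mn)$ is then an explicit product involving $\theta$ evaluated at an eigenvalue of $s$, together with factors of the form $(1-q^{d})(1-(q^{d})^{2})\cdots$ depending on the dimension of the generalised eigenspace. Organising the resulting sum by the semisimple part $s$, the contribution of the scalar (central) part of $s$, which lies in $F^{\times}\subseteq F_{2n}^{\times}$, produces precisely the factor $\theta|_{F^{\times}}(m)$, while the remaining unipotent bookkeeping, weighted by $\overline{\psi_A(n)}$, should collapse to the sum which appears in the Frobenius character formula for $\ind_{U_A}^{H_A}\mu$. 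In parallel I would compute the character of the right hand side directly from the Frobenius formula, exploiting that $U_A=U_1\times U_2$ is the unipotent radical of a Borel of $H_A=M_1\times M_2$ and that $\mu=\mu_1\otimes\mu_2$ is non-degenerate on each factor, so that the computation decouples into two Gelfand--Graev-type contributions, one for each mirabolic factor.

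The main obstacle is the term-by-term matching of these two expressions. For each $m\in M_{\psi_A}$ one must identify the $X\in\M(n,F)$ for which $mn=\begin{bmatrix} m_1 & m_1 X\\ 0 & m_2\end{bmatrix}$ has a Jordan form with semisimple part in $F_{2n}^{\times}$, keep track of the weight $\psi_0(\tr(AX))$, and show that the resulting weighted count reproduces exactly the Frobenius sum defining the character of $\ind_{U_A}^{H_A}\mu$ at $m$. Along the way one also needs to verify that both characters vanish when $m\notin H_A$, so that the isomorphism is really as $M_{\psi_A}$-modules. The analogous bookkeeping was done by hand in the $\GL(4,F)$ and $\GL(6,F)$ cases of \cite{KumHim,KumHimGL6}; the novelty here is to arrange it uniformly in $n$, and the symmetry between the two mirabolic factors $M_1$ and $M_2$ (with $M_2=w_0M_1^{T}w_0^{-1}$) is what one expects to make the match possible.
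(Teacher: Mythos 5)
Your proposal does not address the statement you were asked to prove. The statement in question is the explicit character formula for an irreducible cuspidal representation $\pi_\theta$ of $\GL(m,F)$ on an arbitrary element $g=su$ with Jordan decomposition: vanishing of $\Theta_\theta(g)$ unless the semisimple part $s$ comes from $F_m^\times$, and, when it does, the closed form $(-1)^{m-1}\bigl[\sum_{\alpha=0}^{d-1}\theta(z^{q^\alpha})\bigr](1-q^d)(1-(q^d)^2)\cdots(1-(q^d)^{t-1})$. This is a classical result (Green's theorem, in the reformulation of Prasad), which the paper does not prove at all; it is simply quoted from Theorem~2 of \cite{Dip[1]}. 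What you have written is instead a strategy for the paper's \emph{Main Theorem} about the twisted Jacquet module $\pi_{N,\psi_A}$ of $\GL(2n,F)$. Worse, your sketch explicitly \emph{invokes} the very theorem you were asked to prove as an input (``apply Theorem~\ref{character value of cuspidal representation (Dipendra)}: the semisimple part $s$ must come from $F_{2n}^\times$, and $\Theta_\theta(mn)$ is then an explicit product\dots''), so read as a proof of that theorem the argument is circular.

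An actual proof of the stated formula has to come from the construction of the cuspidal characters themselves, not from their consequences. One route is Green's original combinatorial description of the discrete-series characters of $\GL(m,F)$; the other is to realise $\pi_\theta$ as $(-1)^{m-1}R_{T,\theta}$ for the anisotropic maximal torus $T$ with $T(F)\simeq F_m^\times$, apply the Deligne--Lusztig character formula on Jordan decompositions (which expresses $\Theta_\theta(su)$ as a sum of $\theta$ over rational conjugates carrying $s$ into $T$, weighted by Green functions of the centraliser $C_G(s)$), and then evaluate those Green functions at $u$. The vanishing condition when $s$ is not conjugate into $T$, the sum over the $d$ Galois conjugates of the eigenvalue $z$, the sign $(-1)^{m-1}$, and the polynomial factor $(1-q^d)\cdots(1-(q^d)^{t-1})$ coming from the unipotent Green function of the centraliser of $s$ all fall out of that calculation. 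None of these ingredients appear in your proposal, which is aimed at a different target and presupposes the result.
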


See Theorem 2 in \cite{Dip[1]} for this version.

\subsection{Kirillov Representation}\label{Kirillov}
Let $F$ be a finite field with $q$ elements and $G=\GL(n,F)$. Let $P_n$ be the Mirabolic subgroup of $G$ and let $U$ be the subgroup of unipotent matrices of $G$. In this section, we recall the Kirillov representation of the Mirabolic subgroup $P_n$ of $G$. Let $\psi_0$ be a non-trivial character of $F$ and let $\psi:U \to \mathbb{C}^{\times}$ be the non-degenerate character of $U$ given by 
\[\psi \left( \begin{bmatrix} 1 & x_{12} & x_{13} & \cdots & x_{1,n}\\
& 1 & x_{23} & \cdots & x_{2,n}\\
& & 1 & \cdots & \vdots \\
& & & \ddots & x_{(n-1),n} \\ 
& & & &1 \end{bmatrix} \right)=\psi_0(x_{12}+x_{23} + \cdots + x_{(n-1),n}).\]
Then, $\mathcal{K}=\ind_{U}^{P_n}\psi$ is called the \textit{Kirillov representation} of $P_n$.\\

\begin{theorem}\label{Kirillov representation} $\mathcal{K}=\ind_{U}^{P_n}\psi$ is an irreducible representation of $P_{n}$. 
\end{theorem}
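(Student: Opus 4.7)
The plan is to induct on $n$, exploiting the semidirect-product decomposition $P_n \simeq \GL(n-1,F) \ltimes V$, where $V$ is the abelian normal subgroup of $P_n$ consisting of matrices $\begin{pmatrix} I_{n-1} & v \\ 0 & 1 \end{pmatrix}$ with $v \in F^{n-1}$. The base case $n = 1$ is trivial. For the inductive step, the first move is to notice the compatible decomposition $U = U_{n-1} \ltimes V$, under which $\psi$ splits as $\psi(u \cdot v) = \psi_{n-1}(u)\chi(v)$, where $\psi_{n-1}$ is the generic character of $U_{n-1}$ and $\chi(v) = \psi_0(v_{n-1})$ is the non-trivial character of $V$ that reads off the last coordinate. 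A short conjugation calculation then identifies the $\GL(n-1,F)$-stabilizer of $\chi$ (for the natural action on $V$, which is just left multiplication by $\GL(n-1,F)$ on $F^{n-1}$) with the mirabolic subgroup $P_{n-1} \subset \GL(n-1,F)$.

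Next, by induction in stages, $\mathcal{K} \simeq \ind_{P_{n-1} \ltimes V}^{P_n} \bigl( \ind_{U_{n-1} \ltimes V}^{P_{n-1} \ltimes V} \psi \bigr)$. Writing $\mathcal{K}_{n-1} = \ind_{U_{n-1}}^{P_{n-1}} \psi_{n-1}$, and using that $P_{n-1}$ stabilizes $\chi$ so that $\chi$ extends canonically to a character of $P_{n-1} \ltimes V$, a Frobenius reciprocity comparison combined with the inductive hypothesis (irreducibility of $\mathcal{K}_{n-1}$) will identify the inner induced representation with the twisted tensor product $\mathcal{K}_{n-1} \boxtimes \chi$ (that is, $\mathcal{K}_{n-1}$ extended trivially across $V$, tensored with $\chi$ extended trivially across $P_{n-1}$). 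Hence $\mathcal{K} \simeq \ind_{P_{n-1} \ltimes V}^{P_n}(\mathcal{K}_{n-1} \boxtimes \chi)$, and the classical Clifford correspondence for the normal abelian subgroup $V$ of $P_n$ --- whose stabilizer subgroup for $\chi$ has just been computed to be precisely $P_{n-1} \ltimes V$ --- reduces the irreducibility of $\mathcal{K}$ to that of $\mathcal{K}_{n-1} \boxtimes \chi$, hence to that of $\mathcal{K}_{n-1}$, which holds by the inductive hypothesis.

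The hard part, I expect, will be two linked pieces of bookkeeping: first, pinning down the stabilizer of $\chi$ as exactly $P_{n-1}$, and second, checking that the inner induced representation really is the tensor-product extension $\mathcal{K}_{n-1} \boxtimes \chi$ rather than some more complicated object. Both reduce to elementary manipulations, but they are the substantive content of the inductive step; once they are in place, the Clifford / Mackey machinery delivers the conclusion essentially formally.
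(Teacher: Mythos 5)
The paper does not prove this theorem but refers to Theorem~5.1 of Bump's notes, whose proof is exactly the inductive Mackey/Clifford (``little group'') argument you sketch: decompose $P_n \simeq \GL(n-1,F) \ltimes V$, split $\psi = \psi_{n-1}\otimes\chi$, compute $\Stab_{\GL(n-1,F)}(\chi) = P_{n-1}$, identify the inner induction as $\mathcal{K}_{n-1}\boxtimes\chi$, and apply Clifford theory for the normal abelian subgroup $V$. Your proposal is correct and matches that standard proof in substance and structure; the only cosmetic quibble is that the identification $\ind_{U_{n-1}\ltimes V}^{P_{n-1}\ltimes V}(\psi_{n-1}\otimes\chi)\simeq \mathcal{K}_{n-1}\boxtimes\chi$ is an elementary consequence of $\chi$ extending to $P_{n-1}\ltimes V$ and does not really require Frobenius reciprocity or the inductive hypothesis.
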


We refer the reader to Theorem $5.1$ in \cite{Bump} for a proof.

\subsection{Multiplicity one Theorem for $\GL(n,F)$ over a finite field $F$}
We continue with the notation of section \ref{Kirillov}. 
\begin{theorem} Let $\mathcal{G}=\ind_{U}^{G}(\psi)$. The representation $\mathcal{G}$ of $G$ is multiplicity free. 
\end{theorem}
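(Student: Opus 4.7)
\medskip

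\noindent\textbf{Proof proposal.} The plan is to reduce the statement to the commutativity of the Hecke algebra $\mathcal{H}(G,U,\psi)$ and then establish commutativity by a Gelfand-type involution argument. First I would note that, by Frobenius reciprocity, for every irreducible representation $\pi$ of $G$ one has
\[
\Hom_G(\ind_U^G\psi,\pi)\;\cong\;\Hom_U(\psi,\pi|_U),
\]
so the statement that $\mathcal{G}$ is multiplicity free is equivalent to the uniqueness of the Whittaker functional, i.e.\ $\dim\Hom_U(\psi,\pi|_U)\le 1$ for every irreducible $\pi$. Equivalently, identifying $\End_G(\ind_U^G\psi)$ with the convolution algebra
\[
\mathcal{H}(G,U,\psi)=\{f:G\to\mathbb{C}\mid f(u_1gu_2)=\psi(u_1)\psi(u_2)f(g)\ \text{for all }u_1,u_2\in U,\ g\in G\},
\]
it is enough to prove that $\mathcal{H}(G,U,\psi)$ is a commutative algebra under convolution.

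To prove commutativity I would apply Gelfand's trick. Consider the anti-involution
\[
\sigma:G\to G,\qquad \sigma(g)=w_0\,{}^{\!t}g\,w_0^{-1},
\]
where $w_0$ is the longest Weyl element (the anti-diagonal permutation matrix) and ${}^{\!t}g$ denotes transpose. A direct check shows that $\sigma(U)=U$ and that the character $\psi$ is preserved, $\psi(\sigma(u))=\psi(u)$ for all $u\in U$, because transposing and conjugating by $w_0$ sends the superdiagonal entry $x_{i,i+1}$ to $x_{n-i,n-i+1}$, so the sum $x_{12}+x_{23}+\cdots+x_{(n-1),n}$ is preserved.

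The heart of the argument is then to show that every $f\in\mathcal{H}(G,U,\psi)$ is $\sigma$-invariant, i.e.\ $f\circ\sigma=f$. Once this is established, for $f_1,f_2\in \mathcal{H}(G,U,\psi)$ we obtain
\[
(f_1*f_2)\circ\sigma=(f_2\circ\sigma)*(f_1\circ\sigma)=f_2*f_1,
\]
and since $(f_1*f_2)\circ\sigma=f_1*f_2$, this forces $f_1*f_2=f_2*f_1$ and gives commutativity. To prove the $\sigma$-invariance of an arbitrary $f\in\mathcal{H}(G,U,\psi)$, I would use the Bruhat decomposition $G=\bigsqcup_{w\in W}BwB$ (with $B=TU$) and analyse, for each $g\in\mathrm{supp}(f)$, the double coset $UgU$. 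For each Weyl element $w$ one checks that either $\psi|_{U\cap w^{-1}Uw}$ is non-trivial (in which case no $f\in\mathcal{H}(G,U,\psi)$ can be supported on $BwB$ unless a cancellation forces $f=0$ there) or else the double coset $UgU$ is $\sigma$-stable and the characters on the two sides agree. This Bruhat/Weyl-group bookkeeping is the step I expect to be the main obstacle: one must check carefully, Weyl cell by Weyl cell, that the combinatorics of $\psi$ and its interaction with $\sigma$ conspires to force $f(\sigma(g))=f(g)$.

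Assembling the three ingredients (Frobenius reciprocity, the Hecke algebra identification, and the Gelfand--Bruhat analysis with the anti-involution $\sigma$) yields the commutativity of $\mathcal{H}(G,U,\psi)$, and hence the multiplicity-freeness of $\mathcal{G}=\ind_U^G(\psi)$.
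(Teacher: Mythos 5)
Your overall strategy is the correct one and coincides with the one used in the source the paper cites for this theorem (Bump, Theorem 6.1): reduce multiplicity-freeness of $\ind_U^G\psi$ to commutativity of the Hecke algebra $\mathcal{H}(G,U,\psi)$, and prove commutativity by a Gelfand--Kazhdan anti-involution. The preliminary steps you give are all sound: the Frobenius-reciprocity reduction to uniqueness of Whittaker functionals, the identification of $\End_G(\ind_U^G\psi)$ with the convolution algebra of $(U,\psi)$-bi-equivariant functions, the choice of anti-involution $\sigma(g)=w_0\,{}^t g\,w_0^{-1}$, and the verification that $\sigma(U)=U$ and $\psi\circ\sigma|_U=\psi$.

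However, the proposal stops short of a proof precisely at the step that carries all the content. You explicitly defer the claim that $f\circ\sigma=f$ for every $f\in\mathcal{H}(G,U,\psi)$, calling the Bruhat/Weyl-cell analysis ``the main obstacle''; but that analysis \emph{is} the theorem, and the rest is standard formalism. Moreover, the support criterion you state in passing is not the right one. Whether the double coset $UgU$ can carry a nonzero element of $\mathcal{H}(G,U,\psi)$ is not governed by $\psi|_{U\cap g^{-1}Ug}$ being nontrivial; the correct condition, coming from $u_1 g u_2 = g$, is that $\psi$ and the conjugate character $\psi^g$ (defined by $\psi^g(u)=\psi(gug^{-1})$) \emph{agree} on $U\cap g^{-1}Ug$. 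For the nondegenerate $\psi$ this matching condition is quite restrictive, and one must actually identify the admissible monomial representatives, check that $\sigma$ stabilizes each admissible double coset, and verify that the unipotent factors $u_1,u_2$ with $\sigma(g)=u_1 g u_2$ contribute trivially to $\psi$, so that $f(\sigma(g))=f(g)$ and not merely $f(\sigma(g))\in\mathbb{C}^\times f(g)$. None of this is carried out. So the framework is right and matches the cited proof, but as written the argument has a genuine hole at its center.
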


We refer to Theorem $6.1$ in \cite{Bump} for a proof.

\subsection{Twisted Jacquet Module}

In this section, we recall the character and the dimension formula of the twisted Jacquet module of a representation $\pi$. \\

Let $G=\GL(k,F)$ and $P=MN$ be a parabolic subgroup of $G$. Let $\psi$ be a character of $N$. For $m\in M$, let $\psi^{m}$ be the character of $N$ defined by $\psi^{m}(n)=\psi(mnm^{-1})$. Let \[V(N,\psi)= \Span_{\mathbb{C}} \{\pi(n)v-\psi(n)v \mid n \in N, v \in V \} \]
and \[M_{\psi} = \{m \in M \mid {\psi}^{m}(n)=\psi(n) , \forall n \in N \} . \]
Clearly, $M_{\psi}$ is a subgroup of $M$ and it is easy to see that $V(N,\psi)$ is an $M_{\psi}$-invariant subspace of $V$. Hence, we get a representation  $(\pi_{N,\psi},V/V(N,\psi))$ of $M_{\psi}$. We call $(\pi_{N,\psi}, V/V(N,\psi))$ the twisted Jacquet module of $\pi$ with respect to $\psi$. We write $\Theta_{N, \psi}$ for the character of $\pi_{N, \psi}$.

\begin{proposition}
Let $(\pi,V)$ be a representation of $\GL(k,F)$  and $\Theta_\pi$ be the character of $\pi$. We have
\[\Theta_{N,\psi}(m) = \frac{1}{|N|}\sum_{n \in N}\Theta_{\pi}(mn)\overline{\psi(n)}.\]
\end{proposition}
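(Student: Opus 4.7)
The plan is to identify the quotient $V/V(N,\psi)$ with the $\psi$-isotypic subspace
\[V^{(N,\psi)} = \{v \in V \mid \pi(n)v = \psi(n)v \text{ for all } n \in N\}\]
and then compute the character as the trace of $\pi(m)$ composed with the natural projection onto this subspace. Since $N$ is a finite group and we are working in characteristic zero, Maschke's theorem gives a canonical $N$-equivariant decomposition $V = V^{(N,\psi)} \oplus V(N,\psi)$, so the composition $V^{(N,\psi)} \hookrightarrow V \twoheadrightarrow V/V(N,\psi)$ is an isomorphism. The condition $m \in M_\psi$ means $\psi^m = \psi$, and one checks directly that $\pi(m)$ then preserves both summands, so the action of $m$ on $\pi_{N,\psi}$ matches the action of $\pi(m)$ restricted to $V^{(N,\psi)}$.

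Next I would introduce the averaging operator
\[ e_\psi = \frac{1}{|N|}\sum_{n \in N}\overline{\psi(n)}\,\pi(n) \in \End(V). \]
A short calculation, using that $\psi$ is a character of $N$ and that $N$ is a group, shows that $e_\psi$ is an idempotent whose image is exactly $V^{(N,\psi)}$ and whose kernel contains (in fact equals) $V(N,\psi)$. In particular $e_\psi$ is the projection onto $V^{(N,\psi)}$ along the decomposition above.

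Finally, because $\pi(m)$ preserves $V^{(N,\psi)}$ for $m \in M_\psi$, the character of $\pi_{N,\psi}$ evaluated at $m$ equals the trace of $\pi(m)\, e_\psi$ computed on all of $V$. Linearity of the trace then gives
\[ \Theta_{N,\psi}(m) = \tr\bigl(\pi(m)\,e_\psi\bigr) = \frac{1}{|N|}\sum_{n \in N}\overline{\psi(n)}\,\tr\bigl(\pi(mn)\bigr) = \frac{1}{|N|}\sum_{n \in N}\Theta_\pi(mn)\,\overline{\psi(n)}, \]
which is the desired formula.

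The only mildly delicate point, and essentially the one thing to be careful about, is the identification of $V/V(N,\psi)$ with $V^{(N,\psi)}$; everything else is a routine manipulation of the projector $e_\psi$. Since $N$ is finite and $\mathbb{C}$ has characteristic zero, this identification is standard, so no genuine obstacle arises.
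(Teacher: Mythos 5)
Your argument is correct: the averaging operator $e_\psi=\frac{1}{|N|}\sum_{n\in N}\overline{\psi(n)}\pi(n)$ is the idempotent projecting onto the $\psi$-isotypic component $V^{(N,\psi)}$ with kernel $V(N,\psi)$, the resulting direct-sum decomposition is $M_\psi$-stable, and taking the trace of $\pi(m)e_\psi$ gives exactly the stated formula. The paper itself defers the proof to Proposition 2.3 of the cited reference rather than reproducing it, and the projector computation you give is precisely the standard argument used there, so there is nothing to add.
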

We refer the reader to Proposition 2.3 in \cite{KumHim} for a proof. 

\begin{remark}
Taking $m=1$, we get the dimension of $\pi_{N, \psi}$. To be precise, we have
\[\dim_{\mathbb{C}}(\pi_{N,\psi}) =  \frac{1}{|N|}\sum_{n \in N}\Theta_{\pi}(n)\overline{\psi(n)}.\]
\end{remark}

\subsection{$q$-Hypergeometric Identity}

In this section, we record a certain $q$-identity from \cite{HazGor} which we use in calculating the dimension of the twisted Jacquet module. Before we state it, we set up some notation. Let $\M(n,m,r,q)$ be the set of all $n\times m$ matrices of rank $r$ over the finite field $F$ of order $q$ and $(a;q)_{n}$ be the $q$-Pochhammer symbol defined by 
\[(a;q)_{n}= \prod_{i=0}^{n-1}(1-aq^{i}).\]

\begin{proposition} Let $a$ be an integer greater than or equal to $2n$. Then
\[\sum_{r\geq 0}\M(n,n,r,q)(q;q)_{a-r}= q^{n^{2}}\frac{(q;q)^{2}_{a-n}}{(q;q)_{a-2n}}. \]
\end{proposition}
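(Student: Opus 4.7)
The plan is to reduce the identity to a polynomial identity in the single variable $u := q^{a-n+1}$, expand both sides using the $q$-binomial theorem, and reduce to the tautological rank-stratification $|\M(n,m,F)|=q^{nm}$. First I would factor $(q;q)_{a-r}=(q;q)_{a-n}(u;q)_{n-r}$ for $0\le r\le n$, pull out the common factor $(q;q)_{a-n}$ from both sides, and rewrite the right-hand side using $(q^{a-2n+1};q)_n = q^{-n(n+1)/2}\prod_{j=1}^n(q^j-u)$ (since $1-uq^{-j}=q^{-j}(q^j-u)$). Substituting the standard count $\M(n,n,r,q) = \binom{n}{r}_q^{2}\,|\GL_r(F)|$—obtained by choosing an image, a kernel, and an isomorphism between them—and re-indexing via $s=n-r$, the identity reduces to the polynomial identity
\[T(u) \;:=\; \sum_{s=0}^{n}\binom{n}{s}_q^{2}\,|\GL_{n-s}(F)|\,(u;q)_s \;=\; q^{n(n-1)/2}\prod_{j=1}^{n}(q^j - u).\]

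Next I would expand both sides as polynomials of degree $n$ in $u$ and match coefficients of $(-u)^k$. The left-hand coefficient is $q^{k(k-1)/2}\sum_{s\ge k}\binom{n}{s}_q^{2}\binom{s}{k}_q|\GL_{n-s}(F)|$, via the $q$-binomial theorem $(u;q)_s=\sum_k (-u)^k q^{k(k-1)/2}\binom{s}{k}_q$. The right-hand coefficient is $q^{n(n-1)/2+(n-k)(n-k+1)/2}\binom{n}{k}_q$, using the evaluation $e_\ell(q,q^2,\ldots,q^n)=q^{\ell(\ell+1)/2}\binom{n}{\ell}_q$ (obtained by comparing $\prod_{i=1}^n(1+tq^i)$ with its $q$-binomial expansion). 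A short exponent check, $n(n-1)/2+(n-k)(n-k+1)/2-k(k-1)/2=n(n-k)$, reduces the problem to
\[\sum_{s=k}^{n}\binom{n}{s}_q^{2}\binom{s}{k}_q|\GL_{n-s}(F)| \;=\; q^{n(n-k)}\binom{n}{k}_q.\]
Applying the standard $q$-identity $\binom{n}{s}_q\binom{s}{k}_q=\binom{n}{k}_q\binom{n-k}{s-k}_q$ and substituting $t=n-s$, $m=n-k$, this recasts as
\[\sum_{t=0}^{m}\binom{n}{t}_q\binom{m}{t}_q|\GL_t(F)| \;=\; q^{nm},\]
which is the tautology $\sum_t|\M(n,m,t,q)|=|\M(n,m,F)|=q^{nm}$, since the same image/kernel/isomorphism count gives $|\M(n,m,t,q)|=\binom{n}{t}_q\binom{m}{t}_q|\GL_t(F)|$.

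The main obstacle I anticipate is purely clerical: powers of $q$ accumulate through several transformations (the $(u;q)_s \leftrightarrow (q^j-u)$ rewrite, the $q$-binomial expansion, the symmetric-function evaluation, and the exponent collapse), and a single sign or exponent slip would derail the argument. Conceptually the proof is light, since every ingredient beyond the tautological rank stratification of $\M(n,m,F)$ is a standard $q$-Pochhammer manipulation.
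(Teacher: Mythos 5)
Your proposal is correct, but there is nothing in the paper to compare it against: the authors do not prove this proposition themselves, they cite it as Lemma 2.1 of \cite{HazGor} and move on. So your argument is necessarily a different route, and it stands on its own as a complete, elementary proof. I verified the steps: the factorization $(q;q)_{a-r}=(q;q)_{a-n}(u;q)_{n-r}$ with $u=q^{a-n+1}$ is correct for $0\le r\le n$; the rewrite $\frac{(q;q)_{a-n}}{(q;q)_{a-2n}}=(q^{a-2n+1};q)_n=q^{-n(n+1)/2}\prod_{j=1}^n(q^j-u)$ is correct, and the surviving power of $q$ is $q^{n^2-n(n+1)/2}=q^{n(n-1)/2}$ as you claim; the $q$-binomial expansion of $(u;q)_s$ and the evaluation $e_\ell(q,\dots,q^n)=q^{\ell(\ell+1)/2}\binom{n}{\ell}_q$ are standard and stated with the right exponents; the exponent collapse $n(n-1)/2+(n-k)(n-k+1)/2-k(k-1)/2=n(n-k)$ checks out; the trinomial revision $\binom{n}{s}_q\binom{s}{k}_q=\binom{n}{k}_q\binom{n-k}{s-k}_q$ turns the coefficient identity into $\sum_{t=0}^m\binom{n}{t}_q\binom{m}{t}_q|\GL_t(F)|=q^{nm}$ with $m=n-k$, $t=n-s$, which is indeed the rank stratification of $\M(n,m,F)$ once one uses $|\M(n,m,t,q)|=\binom{n}{t}_q\binom{m}{t}_q|\GL_t(F)|$ (image, kernel, isomorphism). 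The only cosmetic point: your argument uses that both sides, after stripping $(q;q)_{a-n}$, are genuine polynomials of degree $n$ in $u$, so matching coefficients is legitimate; you implicitly use this but it is worth saying explicitly, since the original identity is a numerical one for each integer $a\ge 2n$ and you are upgrading it to a polynomial identity. What your approach buys relative to a citation is a short self-contained proof whose only combinatorial input is the tautology $\sum_t|\M(n,m,t,q)|=q^{nm}$; everything else is $q$-Pochhammer bookkeeping.
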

We refer the reader to Lemma 2.1 in \cite{HazGor} for a proof of the above proposition in a more general set up.

\section{Dimension of the Twisted Jacquet Module}
 
Let $\pi=\pi_{\theta}$ be an irreducible cuspidal representation of $G$ corresponding to the regular character $\theta$ of $F_{2n}^{\times}$ and $\Theta_{\theta}$ be its character. In this section, we calculate the dimension of $\pi_{N,\psi_{A}}$, where 
\[A= \begin{bmatrix} 1 & 0 & \hdots & 0\\
0 &  0 & \hdots  & 0 \\
\vdots & \vdots &  \ddots &\vdots\\
0 & 0 & \hdots & 0
\end{bmatrix}.\]

Throughout, we write $\M(n,m,r,q)$ denote the set of $n\times m$ matrices of rank $r$ over the finite field $F$ of cardinality $q$. For $\alpha \in F$ and $0 \leq r \leq n$, consider the subset $Y_{n,r}^{\alpha}$ of $\M(n,F)$ given by 
\[Y_{n,r}^{\alpha}=\{X \in \M(n,F) \mid \Rank(X)=r, \tr(AX)=\alpha\}.\]


\begin{lemma}\label{M(n,n,r,q)} We have 
$$|\M(n,n,r,q)| = q^r|\M(n,n-1,r,q)| + (q^n-q^{r-1})|\M(n,n-1,r-1,q)|.$$
\end{lemma}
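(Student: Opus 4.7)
The plan is to prove the recurrence by partitioning an $n \times n$ matrix of rank $r$ according to the rank of its first $n-1$ columns, viewed as an element of $\M(n, n-1, F)$.

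More precisely, given $M \in \M(n,n,r,q)$, write $M = [M' \mid v]$ where $M' \in \M(n, n-1, F)$ consists of the first $n-1$ columns of $M$ and $v \in F^{n}$ is the last column. Adding a single column to a matrix can increase its rank by at most $1$, so $\Rank(M')$ is either $r$ or $r-1$; I would treat these two cases separately.

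In the first case, $\Rank(M') = r$ and $\Rank([M' \mid v]) = r$, which forces $v$ to lie in the column span of $M'$. Since this column span is an $r$-dimensional $F$-subspace of $F^n$, it has exactly $q^r$ elements. Thus each $M' \in \M(n, n-1, r, q)$ contributes $q^r$ matrices of rank $r$, giving the first term $q^r \, |\M(n, n-1, r, q)|$. In the second case, $\Rank(M') = r-1$ and $v$ must lie outside the column span of $M'$ (else the rank would stay $r-1$). The column span has $q^{r-1}$ elements inside $F^n$, so $v$ has $q^n - q^{r-1}$ admissible values, yielding the second term $(q^n - q^{r-1}) \, |\M(n, n-1, r-1, q)|$. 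Summing the two cases partitions $\M(n, n, r, q)$ and produces the claimed identity.

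There is no real obstacle here: the only point requiring a moment of care is the standard fact that adjoining a column changes the rank by $0$ or $1$, and that the column span of a rank $s$ matrix in $\M(n, n-1, F)$ is an $s$-dimensional subspace of $F^n$ (of size $q^s$). Both are immediate from elementary linear algebra, so the proof should be short.
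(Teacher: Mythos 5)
Your proof is correct, but it takes a genuinely different route from the paper's. You give a direct combinatorial counting argument: split the matrices $M = [M' \mid v] \in \M(n,n,r,q)$ according to whether $\Rank(M') = r$ or $\Rank(M') = r-1$, and in each case count the admissible last columns $v$ ($q^r$ inside the column span, or $q^n - q^{r-1}$ outside it). The paper instead proves the identity by pure algebraic manipulation: it substitutes the closed-form product formula $|\M(n,m,r,q)| = \prod_{j=0}^{r-1}\frac{(q^n - q^j)(q^m - q^j)}{q^r - q^j}$ into the right-hand side and simplifies until both terms combine into $|\M(n,n,r,q)|$. Your approach is conceptually more transparent, since it actually explains where the recurrence comes from and does not presuppose the product formula; the paper's approach is a shorter formal verification once that formula is granted, and avoids any discussion of column spans. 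Both are complete and correct.
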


\begin{proof} Let $S= q^r|\M(n,n-1,r,q)| + (q^n-q^{r-1})|\M(n,n-1,r-1,q)|$. It is well known that   
\[ |\M(n,m,r,q)|= \prod_{j=0}^{r-1}\frac{(q^{n}-q^{j})(q^{m}-q^{j})}{(q^{r}-q^{j})}. 
\]
Thus, we have 
\begin{align*}
S &= q^r\prod_{j=0}^{r-1}\frac{(q^{n}-q^{j})(q^{n-1}-q^{j})}{(q^{r}-q^{j})} + (q^n-q^{r-1}) \prod_{j=0}^{r-2}\frac{(q^{n}-q^{j})(q^{n-1}-q^{j})}{(q^{r-1}-q^{j})}\\ \\
&= \prod_{j=0}^{r-1}\frac{(q^{n}-q^{j})(q^{n}-q^{j+1})}{(q^{r}-q^{j})} + (q^n-q^{r-1}) \prod_{j=0}^{r-2}\frac{(q^{n}-q^{j})(q^{n}-q^{j+1})}{(q^{r}-q^{j+1})}\\ \\
&= \frac{q^n-q^r}{q^n-1}|\M(n,n,r,q)| + \frac{(q^n-q^{r-1})(q^r-1)}{(q^n-q^{r-1})(q^n-1)}|\M(n,n,r,q)|\\\\
&= |\M(n,n,r,q)|.
\end{align*}
\end{proof}

\begin{lemma}\label{equal cardinality} Let $r\in \{1,2,3, \dots, n\}$ and $\alpha, \beta \in F^{\times}$. Then we have
\[|Y_{n,r}^{\alpha}| = |Y_{n,r}^{\beta}|. \]
\end{lemma}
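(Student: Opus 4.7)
The starting point of the proof is the observation that, with $A = \begin{bmatrix} 1 & 0 & \cdots & 0 \\ 0 & 0 & \cdots & 0 \\ \vdots & \vdots & \ddots & \vdots \\ 0 & 0 & \cdots & 0 \end{bmatrix}$, one has $\tr(AX) = X_{11}$, the $(1,1)$-entry of $X$. Thus $Y_{n,r}^{\alpha}$ is simply the set of $n \times n$ matrices of rank $r$ whose $(1,1)$-entry equals $\alpha$, and the question reduces to showing that this cardinality does not depend on $\alpha$, provided $\alpha \in F^{\times}$.

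The plan is then to exhibit an explicit bijection $\phi : Y_{n,r}^{\alpha} \to Y_{n,r}^{\beta}$ by scalar multiplication. Set $c = \beta \alpha^{-1} \in F^{\times}$ and define $\phi(X) = cX$. Since $c \neq 0$, multiplication by $c$ is invertible and preserves rank, so $\phi(X)$ still has rank $r$; and the $(1,1)$-entry of $cX$ equals $c\alpha = \beta$. Hence $\phi$ maps $Y_{n,r}^{\alpha}$ into $Y_{n,r}^{\beta}$, with evident inverse $X \mapsto c^{-1}X$, and we are done.

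There is no serious obstacle here: once one notices that $\tr(AX) = X_{11}$, the lemma reduces to the trivial fact that the dilation action of $F^{\times}$ on $\M(n,F)$ preserves rank and rescales entries. The hypothesis $\alpha, \beta \in F^{\times}$ is used precisely to ensure that the scalar $c = \beta\alpha^{-1}$ is well defined and nonzero; the analogous equality is false when one of $\alpha, \beta$ is zero, since $\{X : X_{11} = 0\}$ is a linear subspace of $\M(n,F)$ rather than a nontrivial affine slice and intersects the rank strata in proportions that differ from the nonzero case.
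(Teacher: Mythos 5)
Your proof is correct and uses essentially the same bijection as the paper: both multiply by the nonzero scalar $\alpha^{-1}\beta$, which preserves rank and rescales $\tr(AX)$ from $\alpha$ to $\beta$. Your preliminary reduction to the $(1,1)$-entry is harmless but unnecessary, since $\tr(A(cX)) = c\,\tr(AX)$ holds for any $A$ and the paper's proof works directly with the trace.
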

\begin{proof} Consider the map $\phi: Y_{n,r}^{\alpha} \to Y_{n,r}^{\beta}$ given by \[\phi(X)=\alpha^{-1}\beta X.\] 
Suppose that $\phi(X)=\phi(Y)$. Since $\alpha^{-1}\beta \neq 0$, it follows that $\phi$ is injective. For $Y \in Y_{n,r}^{\beta}$, let $X=\alpha \beta^{-1} Y$. Clearly, we have $\tr(AX)=\alpha$ and $\Rank(X)=\Rank(Y)=r$. Thus $\phi$ is surjective and hence the result.
\end{proof}

\begin{lemma}\label{cardinality of trace 0} 
$$|Y^{0}_{n,r}|= q^{-1}|\M(n,n,r,q)| + (q^r - q^{r-1})|\M(n-1,n-1,r,q)| + (q^{r-2}-q^{r-1})|\M(n-1,n-1,r-1,q)|. $$
\end{lemma}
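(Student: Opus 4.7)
The approach is a direct block decomposition. Writing each $X \in \M(n, F)$ with $X_{11} = 0$ in the form $X = \left[\begin{smallmatrix} 0 & u^T \\ v & D \end{smallmatrix}\right]$ (with $u, v \in F^{n-1}$ and $D \in \M(n-1, F)$), I would stratify by $s := \Rank(D)$.

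First I would observe that the action $(g, h) \cdot (u, v, D) = (h^{-T} u, g v, g D h^{-1})$ of $\GL_{n-1}(F) \times \GL_{n-1}(F)$ is induced by a block-diagonal conjugation of $X$ and hence preserves $\Rank(X)$; in particular, the count of admissible $(u, v)$ for each fixed $D$ of rank $s$ depends only on $s$. So I may place $D$ in the normal form $\left[\begin{smallmatrix} I_s & 0 \\ 0 & 0 \end{smallmatrix}\right]$ and split $u = (u_1, u_2)$ and $v = (v_1, v_2)$ along $F^s \oplus F^{n-1-s}$. Sweeping out $u_1$ and $v_1$ via elementary row/column operations against the $I_s$ block reduces $X$ to
\[
\begin{bmatrix} -u_1^T v_1 & 0 & u_2^T \\ 0 & I_s & 0 \\ v_2 & 0 & 0 \end{bmatrix},
\]
so that $\Rank(X) = s + \Rank(M)$ with $M = \left[\begin{smallmatrix} -u_1^T v_1 & u_2^T \\ v_2 & 0 \end{smallmatrix}\right]$. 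Inspection shows $\Rank(M) \in \{0, 1, 2\}$, so only $s \in \{r, r-1, r-2\}$ contributes to $|Y^0_{n,r}|$.

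Next I would count $(u, v)$ in each stratum. For $s = r$ one counts pairs $(u_1, v_1) \in (F^r)^2$ with $u_1^T v_1 = 0$, giving $q^{2r-1} + q^r - q^{r-1}$. For $s = r-1$ the count splits into three subcases ($u_2 \neq 0, v_2 = 0$; $u_2 = 0, v_2 \neq 0$; and $u_2 = v_2 = 0$ with $u_1^T v_1 \neq 0$), summing to $2q^{n+r-2} - q^{2r-2} - q^{2r-3} + q^{r-2} - q^{r-1}$. For $s = r-2$ both $u_2$ and $v_2$ must be nonzero, contributing $(q^{n-r+1} - 1)^2 q^{2r-4}$. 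Multiplying by $|\M(n-1,n-1,s,q)|$ and summing yields a three-term formula for $|Y^0_{n,r}|$.

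To reach the stated two-term form, I would run the \emph{same} reduction with no constraint on $X_{11}$: the scalar $\alpha := X_{11} - u_1^T v_1$ is now free in $F$, which multiplies each of the subcase counts by a controlled factor. The resulting three-term expansion of $|\M(n,n,r,q)|$ (equivalent to applying Lemma \ref{M(n,n,r,q)} once by column and once by row) has its coefficient of $|\M(n-1,n-1,r-2,q)|$ equal to exactly $q$ times the one appearing in $|Y^0_{n,r}|$. Subtracting $q^{-1}|\M(n,n,r,q)|$ therefore kills the $|\M(n-1,n-1,r-2,q)|$ term in the $Y^0$-expansion, and a direct check shows the remaining coefficients collapse to $q^r - q^{r-1}$ and $q^{r-2} - q^{r-1}$, yielding the claim. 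The main obstacle is the $\Rank(M) = 1$ case analysis together with the final coefficient matching that makes the $|\M(n-1,n-1,r-2,q)|$ contributions cancel exactly against the $q^{-1}|\M(n,n,r,q)|$ principal term.
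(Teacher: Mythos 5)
Your proposal is correct, and it takes a genuinely different route from the paper. The paper decomposes $X = \left[\begin{smallmatrix} 0 & w \\ v & Y \end{smallmatrix}\right]$ and stratifies by the rank of the $n \times (n-1)$ submatrix $\left[\begin{smallmatrix} w \\ Y \end{smallmatrix}\right]$ formed by the last $n-1$ columns, which can only be $r$ or $r-1$; it then counts the admissible first columns $\left[\begin{smallmatrix} 0 \\ v \end{smallmatrix}\right]$ by computing $\dim(V \cap W)$ case by case, and invokes Lemma~\ref{M(n,n,r,q)} once at the end to collapse the resulting $|\M(n,n-1,\cdot,q)|$ terms. You instead stratify by $s = \Rank(D)$ for the $(n-1)\times(n-1)$ corner block $D$, use the two-sided $\GL_{n-1}\times\GL_{n-1}$ action (this is two-sided multiplication rather than conjugation, but it still preserves $\Rank(X)$ and the $(1,1)$-entry, which is all that is needed) to normalize $D$, and reduce to a residual $(n-s)\times(n-s)$ matrix $M$ of rank $0$, $1$, or $2$, so that three strata $s = r, r-1, r-2$ contribute; I checked that each of your subcase counts ($q^{2r-1}+q^r-q^{r-1}$, $2q^{n+r-2}-q^{2r-2}-q^{2r-3}+q^{r-2}-q^{r-1}$, $(q^{n-r+1}-1)^2 q^{2r-4}$) is correct. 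Your final observation — that running the same reduction with $X_{11}$ free gives a parallel three-term expansion of $|\M(n,n,r,q)|$ whose $|\M(n-1,n-1,r-2,q)|$ coefficient is exactly $q$ times yours, so that $q^{-1}|\M(n,n,r,q)|$ cancels that stratum and leaves coefficients $q^r - q^{r-1}$ and $q^{r-2}-q^{r-1}$ — also checks out numerically. The trade-off is that the paper gets a shorter stratification (two column ranks, two subcases each) but messier coefficients that need Lemma~\ref{M(n,n,r,q)} to simplify, whereas you get a cleaner normal-form picture at the cost of tracking three strata and performing the coefficient-matching cancellation at the end.
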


\begin{proof} Let $\mathfrak{B} =\{e_1,e_2,\hdots,e_n\}$ be a basis of $F^{n}$ over $F$ and $X\in Y_{n,r}^{0}$. Then, \[[X]_{\mathfrak{B}}=\begin{bmatrix} 0 & w \\ v & Y\end{bmatrix}\] where $w$ is an $1 \times (n-1)$ row vector, $v$ is an $(n-1) \times 1$ column vector and $Y$ is an $(n-1)\times (n-1)$ block matrix. We also write \[ \begin{bmatrix} w \\ Y\end{bmatrix} =   \begin{bmatrix} v_1 & v_2 & \cdots & v_{n-1} \end{bmatrix}\] where $v_i$ is an $n \times 1 $ column vector for $1 \leq i \leq n-1$.\\

Let $V$ be the $n-1$ dimensional hyperplane spanned by the vectors $\{e_2,e_3,\dots, e_n\}$. It is easy to see that $\begin{bmatrix}0\\ v \end{bmatrix} \in V$. We let $W$ be the space spanned by the vectors $\{v_1,v_2,\dots, v_{n-1}\}$. Since $X \in Y_{n,r}^{0}$, the rank of the $n\times (n-1)$ matrix \[\begin{bmatrix} w \\ Y\end{bmatrix} =  \begin{bmatrix} v_1 & v_2 & \cdots & v_{n-1} \end{bmatrix}\] has only two possibilities, either $r$ or $r-1$.
We consider both these cases separately. \\

\begin{enumerate}
\item[Case 1)] Suppose that \[\Rank \left( \begin{bmatrix} w \\ Y \end{bmatrix}\right) = \Rank(\begin{bmatrix} v_1 & v_2 & \cdots & v_{n-1} \end{bmatrix})=r.\] Then $\dim W=r$. It follows that, $\begin{bmatrix}0 \\ v \end{bmatrix} \in W$ and hence $\begin{bmatrix} 0\\ 
v \end{bmatrix} \in V \cap W$. Therefore, the number of choices for $\begin{bmatrix} v_1 & v_2 & \cdots & v_{n-1} \end{bmatrix}$ is $|\M(n,n-1,r,q)|$.  \\ 
\item[a)] If $w=0$, then \[W \subseteq V.\] Hence, $V\cap W=W$ and $\dim(V\cap W) = \dim W = r.$ Since $\begin{bmatrix}0 \\ v \end{bmatrix} \in V \cap W$, the number of possibilities of $\begin{bmatrix}0 \\ v \end{bmatrix}$ will be $q^r$. Also, the total number of matrices $\begin{bmatrix} w \\ Y\end{bmatrix}$ with rank $r$ and $w=0$ is $|\M(n-1,n-1,r,q)|$.\\

\item[b)] If $w \neq 0$, we have $W\not \subseteq V$. Therefore, 
\begin{align*}\dim (W\cap V) &= \dim V + \dim W -\dim(V+W) \\
&= n-1 + r -n = r-1.
\end{align*} 
Since $\begin{bmatrix}0 \\ v \end{bmatrix}\in V\cap W$, the number of possibilities of $\begin{bmatrix}0 \\ v \end{bmatrix}$ will be $q^{r-1}$. The number of matrices $\begin{bmatrix} w \\ Y\end{bmatrix}$ with rank $r$ and $w \neq 0$, is \[|\M(n,n-1,r,q)|-|\M(n-1,n-1,r,q)|.\]
\end{enumerate}

\begin{enumerate}
\item[Case 2)] Suppose that \[\Rank \left( \begin{bmatrix} w \\ Y\end{bmatrix}\right)=\Rank(\begin{bmatrix} v_1 & v_2 & \cdots & v_{n-1} \end{bmatrix})=r-1.\] Then $\dim W =r-1$. Therefore, $v \not \in W$ and hence $\begin{bmatrix}0 \\ v \end{bmatrix} \in V \backslash W$. Also,we have that the total number of matrices $ \begin{bmatrix} w \\ Y\end{bmatrix}$ with rank $r-1$ is $|\M(n,n-1,r-1,q)|.$ \\  
\item[a)] If $w=0$, then $W \subseteq V$. Therefore, $V\cap W=W$ and $\dim (V\cap W) = \dim W = r-1$. Since $\begin{bmatrix}0 \\ v \end{bmatrix}\in V\backslash W$, the number of possibilities of $\begin{bmatrix}0 \\ v \end{bmatrix}$ will be $q^{n-1} - q^{r-1}$. Furthermore, The total number of matrices $\begin{bmatrix} w \\ Y\end{bmatrix}$ with rank $r-1$ and $w=0$ is $|\M(n-1,n-1,r-1,q)|$.\\

\item[b)] If $w\neq 0$, then $W\not \subseteq V$. Therefore, 
\begin{align*}
\dim (W\cap V) &= \dim V + \dim W -\dim(V+W)\\
&= n-1 + r-1 -n = r-2.
\end{align*} 
Since $v\in V\backslash W$, the number of possibilities of $\begin{bmatrix}0 \\ v \end{bmatrix}$ will be $q^{n-1} - q^{r-2}$. The total number of matrices in this case will be $|\M(n,n-1,r-1,q)|-|\M(n-1,n-1,r-1,q)|$.\\
\end{enumerate}
Using Lemma \ref{M(n,n,r,q)}, and the above computations, we have

\begin{align*} 
|Y^{0}_{n,r}| &= q^r|\M(n-1,n-1,r,q)| + q^{r-1}(|\M(n,n-1,r,q)|-|\M(n-1,n-1,r,q)|) \\
          &+(q^{n-1} - q^{r-1})|\M(n-1,n-1,r-1,q)|\\
          &+(q^{n-1} - q^{r-2})(|\M(n,n-1,r-1,q)|-|\M(n-1,n-1,r-1,q)|)\\\\           
          &= q^{r-1}|\M(n,n-1,r,q)| + (q^{n-1} - q^{r-2})|\M(n,n-1,r-1,q)|\\
          &+ (q^r - q^{r-1})|\M(n-1,n-1,r,q)|\\
          &+ (q^{n-1}-q^{r-1}-q^{n-1}+q^{r-2})|\M(n-1,n-1,r-1,q)|\\ \\          
          &= q^{r-1}|\M(n,n-1,r,q)| + (q^{n-1} - q^{r-2})|\M(n,n-1,r-1,q)|\\
          &+ (q^r - q^{r-1})|\M(n-1,n-1,r,q)| + (q^{r-2}-q^{r-1})|\M(n-1,n-1,r-1,q)|\\ \\          
          &= q^{-1}|\M(n,n,r,q)|+ (q^r - q^{r-1})|\M(n-1,n-1,r,q)|\\
          &+ (q^{r-2}-q^{r-1})|\M(n-1,n-1,r-1,q)|. 
\end{align*}
\end{proof}

\begin{lemma}\label{cardinality of trace 1} We have 
\[|Y^{1}_{n,r}| = q^{-1}|\M(n,n,r,q)|-q^{r-1}|\M(n-1,n-1,r,q)|+q^{r-2}|\M(n-1,n-1,r-1,q)|.\]
\end{lemma}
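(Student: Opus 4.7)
The proof I have in mind is quite short and should be essentially an algebraic consequence of the two preceding lemmas. Here is the plan.

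The key observation is that $Y^{\alpha}_{n,r}$, as $\alpha$ ranges over $F$, partitions the set of rank-$r$ matrices in $\M(n,F)$. Indeed, $\tr(AX)$ is just the $(1,1)$-entry of $X$ (since $A$ has a single $1$ in the $(1,1)$ position), so every rank-$r$ matrix lies in exactly one $Y^{\alpha}_{n,r}$. Hence
\[
|\M(n,n,r,q)| \;=\; \sum_{\alpha\in F}|Y^{\alpha}_{n,r}| \;=\; |Y^{0}_{n,r}| + \sum_{\alpha\in F^{\times}}|Y^{\alpha}_{n,r}|.
\]
By Lemma \ref{equal cardinality}, all summands on the right for $\alpha\in F^{\times}$ are equal to $|Y^{1}_{n,r}|$, and so
\[
|Y^{1}_{n,r}| \;=\; \frac{|\M(n,n,r,q)|-|Y^{0}_{n,r}|}{q-1}.
\]

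The next step is simply to substitute the expression for $|Y^{0}_{n,r}|$ from Lemma \ref{cardinality of trace 0} and simplify. Using that $1-q^{-1}=(q-1)/q$, that $q^{r}-q^{r-1}=q^{r-1}(q-1)$, and that $q^{r-2}-q^{r-1}=-q^{r-2}(q-1)$, the factor $(q-1)$ cancels uniformly from all three terms, leaving
\[
|Y^{1}_{n,r}| \;=\; q^{-1}|\M(n,n,r,q)| \;-\; q^{r-1}|\M(n-1,n-1,r,q)| \;+\; q^{r-2}|\M(n-1,n-1,r-1,q)|,
\]
which is exactly the desired formula.

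There is essentially no obstacle here: the substantive work was done in the previous two lemmas (the counting argument in Lemma \ref{cardinality of trace 0} and the scaling bijection in Lemma \ref{equal cardinality}). The only thing to be careful about is the bookkeeping of signs and powers of $q$ in the cancellation, which I have already checked above.
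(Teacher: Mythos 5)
Your proposal is correct and is essentially identical to the paper's own argument: both derive $|Y^{0}_{n,r}|+(q-1)|Y^{1}_{n,r}|=|\M(n,n,r,q)|$ from Lemma~\ref{equal cardinality}, then substitute the formula for $|Y^{0}_{n,r}|$ from Lemma~\ref{cardinality of trace 0} and cancel the factor $(q-1)$. The only added touch in your write-up, making explicit that $\tr(AX)$ is the $(1,1)$-entry of $X$, is a harmless clarification of a point the paper leaves implicit.
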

\begin{proof} Using Lemma~\ref{equal cardinality}, we have 
$$|Y^{0}_{n,r}|+(q-1)|Y^{1}_{n,r}|=|\M(n,n,r,q)|.$$ Thus we get,
\begin{align*}
    |Y^{1}_{n,r}| &= \frac{\M(n,n,r,q)| - |Y^{0}_{n,r}|}{q-1}\\\\
    &= \frac{\M(n,n,r,q)| - q^{-1}|\M(n,n,r,q)|}{q-1}\\
    &-\frac{(q^r - q^{r-1})|\M(n-1,n-1,r,q)|+ (q^{r-2}-q^{r-1})|\M(n-1,n-1,r-1,q)|}{q-1}\\\\
    &= q^{-1}|\M(n,n,r,q)| - q^{r-1}|\M(n-1,n-1,r,q)| + q^{r-2}|\M(n-1,n-1,r-1,q)|.
\end{align*}
\end{proof}

\begin{lemma} We have 
\[|Y^{0}_{n,r}|-|Y^{1}_{n,r}|= q^r|\M(n-1,n-1,r,q)| - q^{r-1}|\M(n-1,n-1,r-1,q)|.\]
\end{lemma}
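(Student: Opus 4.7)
The plan is simply to subtract the two preceding lemmas, since the statement is almost immediate from them. Specifically, Lemma~\ref{cardinality of trace 0} gives a formula for $|Y^{0}_{n,r}|$ and Lemma~\ref{cardinality of trace 1} gives a formula for $|Y^{1}_{n,r}|$, and both contain the common term $q^{-1}|\M(n,n,r,q)|$, which cancels upon subtraction.

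After the cancellation, what remains is the combination
\[
\bigl((q^r-q^{r-1}) - (-q^{r-1})\bigr)|\M(n-1,n-1,r,q)| + \bigl((q^{r-2}-q^{r-1}) - q^{r-2}\bigr)|\M(n-1,n-1,r-1,q)|,
\]
which simplifies to $q^r|\M(n-1,n-1,r,q)| - q^{r-1}|\M(n-1,n-1,r-1,q)|$, exactly as claimed. There is no real obstacle here; the work has already been done in establishing the two prior lemmas, and this lemma is merely recording a convenient difference formula (presumably packaged separately because it will be used in a subsequent character or dimension computation, where the contribution of $|\M(n,n,r,q)|$ is handled elsewhere).
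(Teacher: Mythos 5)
Your proposal is correct and follows exactly the same route as the paper, which simply cites the two preceding lemmas; your version additionally writes out the cancellation and the resulting coefficients explicitly, which checks out.
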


\begin{proof} Follows from Lemma~\ref{cardinality of trace 0} and Lemma~\ref{cardinality of trace 1}.  
\end{proof}

\begin{lemma}
Let $r \in\{0,1,2,\dots, n\}$ and $X \in \mathrm{M}(n,n, r, q)$. We have
$$
\Theta_{\theta}\left(\left[\begin{array}{cc}
1 & X \\
0 & 1
\end{array}\right]\right)
=
\begin{cases}
(-1)(q;q)_{2n-1}, & \text { if } r=0 \\
(-1)(q;q)_{2n-2}, & \text { if } r=1 \\
\quad \vdots \\
(-1)(q;q)_{n-1}, & \text { if } r=n
\end{cases}
$$
\end{lemma}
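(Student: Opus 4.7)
The plan is to apply Theorem~\ref{character value of cuspidal representation (Dipendra)} directly to $g = \begin{bmatrix} 1 & X \\ 0 & 1 \end{bmatrix}$, which is already a unipotent matrix.

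First I would observe that the Jordan decomposition of $g$ is $g = s \cdot u$ with semisimple part $s = I_{2n}$ and unipotent part $u = g$. Since $s = 1 \in F^{\times} \subset F_{2n}^{\times}$, the hypothesis of Theorem~\ref{character value of cuspidal representation (Dipendra)} is satisfied, and we may take $z = 1$ as our eigenvalue. The field generated by $z = 1$ over $F$ is $F$ itself, so $d = 1$ in the notation of the theorem, and the only Galois conjugate of $z$ is $1$, making the character sum collapse to $\theta(1) = 1$.

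Next I would compute $t$, the dimension of $\ker(g - z \cdot I_{2n})$ over $F_{2n}$. We have
\[
g - I_{2n} = \begin{bmatrix} 0 & X \\ 0 & 0 \end{bmatrix},
\]
whose rank is exactly $\Rank(X) = r$. Hence the kernel has dimension $t = 2n - r$.

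Substituting these values into the formula of Theorem~\ref{character value of cuspidal representation (Dipendra)} yields
\[
\Theta_{\theta}(g) = (-1)^{2n-1} \cdot 1 \cdot \prod_{i=1}^{t-1}(1 - q^{i}) = -\prod_{i=1}^{2n-r-1}(1 - q^{i}) = -(q;q)_{2n-1-r},
\]
which matches the claimed formula for each $r \in \{0, 1, \dots, n\}$. There is no serious obstacle here; the main point requiring care is the identification $t = 2n - r$, which follows immediately from the block form of $g - I_{2n}$. The computation is essentially a direct plug-in into the character formula, once the Jordan decomposition is correctly identified.
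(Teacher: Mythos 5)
Your proof is correct and takes the same approach as the paper, which simply cites Theorem~\ref{character value of cuspidal representation (Dipendra)} and rewrites the resulting product in $q$-Pochhammer notation. You have just spelled out the details — the trivial Jordan decomposition, the identification $z=1$, $d=1$, $t=2n-r$ from the rank of the off-diagonal block — that the paper's one-line proof leaves to the reader.
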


\begin{proof} The proof follows from Theorem~\ref{character value of cuspidal representation (Dipendra)} above and rewriting the character values using the $q$-Pochhammer symbol.  
\end{proof}

\begin{theorem}\label{dimension calculation} Let $\theta$ be a regular character of $F_{2n}^{\times}$ and $\pi=\pi_{\theta}$ be an irreducible cuspidal representation of $\GL(2n,F)$. We have \[\dim_{\mathbb{C}}(\pi_{N, \psi_{A}})= (q-1)^{2}(q^2-1)^{2}\cdots (q^{n-1}-1)^{2}=(q;q)^{2}_{n-1}.\]
\end{theorem}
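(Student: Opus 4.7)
The plan is to apply the dimension formula from the Remark following Proposition 2.4, namely
\[
\dim_{\mathbb{C}}(\pi_{N,\psi_A}) \;=\; \frac{1}{q^{n^{2}}}\sum_{X \in \M(n,F)} \Theta_{\theta}\!\left(\begin{bmatrix} 1 & X \\ 0 & 1\end{bmatrix}\right)\overline{\psi_{0}(\tr(AX))},
\]
and reorganize the sum by the rank of $X$ and by the value of $\tr(AX)$. The preceding lemma tells us that $\Theta_{\theta}$ on such a unipotent element depends only on $r = \Rank(X)$, equaling $-(q;q)_{2n-1-r}$, so the inner sum over $X$ of fixed rank becomes $\sum_{\alpha \in F} |Y_{n,r}^{\alpha}| \,\overline{\psi_{0}(\alpha)}$.

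Next I would use Lemma~\ref{equal cardinality} together with the identity $\sum_{\alpha \in F^{\times}} \psi_{0}(\alpha) = -1$ to collapse this to $|Y_{n,r}^{0}| - |Y_{n,r}^{1}|$ for $r \geq 1$, while the $r=0$ contribution is simply $1$ (and fits the same formula if one interprets $|\M(n-1,n-1,-1,q)| = 0$). Invoking the lemma immediately preceding this theorem, we substitute
\[
|Y_{n,r}^{0}| - |Y_{n,r}^{1}| \;=\; q^{r}|\M(n-1,n-1,r,q)| - q^{r-1}|\M(n-1,n-1,r-1,q)|,
\]
so that
\[
\dim_{\mathbb{C}}(\pi_{N,\psi_A}) \;=\; \frac{-1}{q^{n^{2}}}\sum_{r=0}^{n}(q;q)_{2n-1-r}\bigl(q^{r}|\M(n-1,n-1,r,q)| - q^{r-1}|\M(n-1,n-1,r-1,q)|\bigr).
\]

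Then I would reindex the second half of the sum ($s = r-1$) and combine, noting that the boundary term at $r=n$ vanishes because $|\M(n-1,n-1,n,q)|=0$. Using the telescoping identity $(q;q)_{2n-1-r} - (q;q)_{2n-2-r} = -q^{2n-1-r}(q;q)_{2n-2-r}$, the expression collapses to
\[
\dim_{\mathbb{C}}(\pi_{N,\psi_A}) \;=\; \frac{q^{2n-1}}{q^{n^{2}}}\sum_{r=0}^{n-1}|\M(n-1,n-1,r,q)|\,(q;q)_{2n-2-r}.
\]

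The last step is to apply the $q$-hypergeometric identity of Proposition~2.5 with parameters $n \mapsto n-1$ and $a = 2n-2 \geq 2(n-1)$, which evaluates the remaining sum to $q^{(n-1)^{2}}(q;q)_{n-1}^{2}/(q;q)_{0} = q^{(n-1)^{2}}(q;q)_{n-1}^{2}$. Combining the powers of $q$ gives $q^{2n-1+(n-1)^2-n^2}=1$, and the announced value $(q;q)_{n-1}^{2}$ drops out. I expect the main obstacle to be purely bookkeeping: matching the boundary behaviour at $r=0$ and $r=n$ with the formal formula and checking that the telescoping cleanly reduces the sum to a shape where Proposition~2.5 applies with the correct parameters.
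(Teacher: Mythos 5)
Your proposal is correct and takes essentially the same route as the paper's proof: partition the sum by rank, collapse the inner sum over traces to $|Y_{n,r}^{0}|-|Y_{n,r}^{1}|$ via Lemma~\ref{equal cardinality} and $\sum_{\alpha\in F^{\times}}\psi_0(\alpha)=-1$, substitute the expression from the preceding lemma, reindex and telescope using $(q;q)_{2n-1-r}-(q;q)_{2n-2-r}=-q^{2n-1-r}(q;q)_{2n-2-r}$, and finish by invoking the $q$-hypergeometric identity with parameters $n-1$ and $a=2n-2$. All intermediate steps and boundary checks match those in the paper's computation.
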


\begin{proof} It is easy to see that the dimension of $\pi_{N,\psi_{A}}$ is given by 
\[\dim_{\mathbb{C}}(\pi_{N,\psi_{A}}) = \frac{1}{q^{n^2}} \sum_{X\in \M(n,F)}\Theta_{\theta}\left(\begin{bmatrix} 1 & X \\ 0 & 1\end{bmatrix}\right)\overline{\psi_{0}(\tr(AX))}. \]
Clearly, we have $\displaystyle \M(n,F)= \bigcup_{r=0}^{n}\left(\bigcup_{\alpha\in F}Y^{\alpha}_{n,r}\right)$. Using this, we see that 
\begin{align*}
\dim_{\mathbb{C}}(\pi_{N,\psi_{A}}) &= \frac{1}{q^{n^2}} \sum_{r=0}^n \sum_{\substack{X\in Y^{\alpha}_{n,r} \\ \alpha \in F}}\Theta_{\theta}\left(\begin{bmatrix} 1 & X \\ 0 & 1\end{bmatrix}\right)\overline{\psi_{0}(\alpha)}\\
&= \frac{1}{q^{n^2}} \sum_{r=0}^n (-1)(q;q)_{2n-1-r}\left(|Y^{0}_{n,r}|-|Y^{1}_{n,r}|\right)\\
&= -\frac{1}{q^{n^2}} \big[(q;q)_{2n-1}q^0 \left|\M(n-1,n-1,0,q)\right | \\
&+  \sum_{r=1}^n (q;q)_{2n-1-r}(q^r \left|\M(n-1,n-1,r,q)\right| - q^{r-1}\left|\M(n-1,n-1,r-1,q)\right|)\big]\\
&= -\frac{1}{q^{n^2}} \sum_{r=0}^{n-1} q^r \left((q;q)_{2n-1-r} - (q;q)_{2n-2-r}\right) \left|\M(n-1,n-1,r,q)\right|\\ 
&= \frac{1}{q^{n^2}}\sum_{r=0}^{n-1} q^{2n-1}\left|\M(n-1,n-1,r,q)\right|(q;q)_{2n-2-r}\\ 
&= \frac{1}{q^{(n-1)^2}}\sum_{r=0}^{n-1} \left|\M(n-1,n-1,r,q)\right|(q;q)_{2n-2-r}\\
&= (q;q)_{n-1}^2.
\end{align*}
\end{proof}

\begin{remark} Suppose that $B=Aw_{0}$. It is easy to see that $\Theta_{N,\psi_{A}}\left(\begin{bmatrix} m_{1} & 0 \\ 0 & m_{2}\end{bmatrix}\right)= \Theta_{N,\psi_{B}}\left( \begin{bmatrix} w_{0}m_{1}w_{0} & 0 \\ 0 & m_{2}\end{bmatrix} \right)$. Thus we have that $\dim_{\mathbb{C}}(\pi_{N,\psi_{A}})=\dim_{\mathbb{C}}(\pi_{N, \psi_{B}})$. \\
\end{remark}



\section{Main Theorem}
In this section, we prove the main result of this paper. Before we continue, we set up some notation and record a few preliminary results that we need. 
Let $G = \GL(2n,F)$ and $P$ be the maximal parabolic subgroup of $G$ with Levi decomposition $P = MN$, where $M \simeq \GL(n,F) \times \GL(n,F)$ and $N \simeq \M(n,F)$. We write $F_{n}$ for the unique field extension of $F$ of degree $n$. Let $\psi_{0}$ be a fixed non-trivial additive character of $F$.
Let \[A= \begin{bmatrix} 0 & \hdots & 0 & 1\\
0 & \hdots  & 0 &  0 \\
\vdots & \ddots &  \vdots &\vdots\\
0 & \hdots & 0 & 0
\end{bmatrix}.\]
Let $\psi_{A}: N \rightarrow \mathbb{C}^{\times}$ be the character of $N$ given by 
\[\psi_{A} \left( \begin{bmatrix} 1 & X\\
0 & 1 \end{bmatrix} \right)=\psi_{0}(\tr(AX)).\] 
Let $H_{A}=M_{1} \times M_{2}$ where $M_{1}$ is the Mirabolic subgroup of $\GL(n,F)$ and $M_{2}={w_0}{M_{1}}^{\top}{w_0}^{-1}$. Let $U$ be the subgroup of unipotent matrices in $\GL(2n,F)$. Let $U_{A}=H_{A} \cap U$. Clearly, we have $U_{A} \simeq U_{1} \times U_{2}$ where $U_{1}$ and
$U_{2}$ are the upper triangular unipotent subgroups of $\GL(n, F )$. For $k=1,2$, let
$\mu_{k} : U_{k} \rightarrow \mathbb{C}^{\times}$ be the non-degenerate character of $U_{k}$ given by
\[\mu_{k} \left ( \begin{bmatrix} 
1 & x_{12} & x_{13} & \hdots & x_{1n} \\
 & 1 & x_{23} & \hdots & x_{2n}\\
 & & 1 & \ddots & \vdots \\
 & & & \ddots & x_{n-1,n} \\
 & & & & 1 \end{bmatrix} \right) =\psi_0(x_{12}+x_{23}+ \cdots + x_{n-1,n}).\] 
 Let $\mu:U_{A} \rightarrow \mathbb{C}^{\times}$ be the character of $U_{A}$ given by 
 \[\mu(u)=\mu_{1}(u_1)\mu_{2}(u_2)\]
 where $u=\begin{bmatrix} u_{1} & \\
 & u_{2} \end{bmatrix}$.

\begin{lemma}\label{elements in M psi A} Let $M_{\psi_{A}}=\{m\in M \mid \psi^{m}_{A}(n)= \psi_{A}(n), \forall n\in N\}$. Then we have
\[M_{\psi_{A}}= \left\lbrace\begin{bmatrix} C & x & & \\
0 & a & &\\
& & a & y\\
& & 0 & D \end{bmatrix} \,\middle\vert\, a \in F^{\times},C, D \in \GL(n-1,F), x, y \in F^{n-1} \right\rbrace.\]
\end{lemma}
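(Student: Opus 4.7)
The plan is to translate the invariance condition $\psi_A^m = \psi_A$ into a concrete matrix equation, and then read off the constraints on the entries of $m$.

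First, for $m = \begin{bmatrix} m_1 & 0 \\ 0 & m_2 \end{bmatrix} \in M$ and $n = \begin{bmatrix} 1 & X \\ 0 & 1 \end{bmatrix} \in N$, a direct block computation gives $m n m^{-1} = \begin{bmatrix} 1 & m_1 X m_2^{-1} \\ 0 & 1 \end{bmatrix}$, so $\psi_A^m(n) = \psi_0\bigl(\tr(A m_1 X m_2^{-1})\bigr) = \psi_0\bigl(\tr((m_2^{-1} A m_1) X)\bigr)$ by the cyclic property of the trace. The condition $\psi_A^m = \psi_A$ therefore becomes $\psi_0\bigl(\tr((m_2^{-1} A m_1 - A) X)\bigr) = 1$ for every $X \in \M(n,F)$. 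Since $\psi_0$ is non-trivial and the trace pairing $(A',X) \mapsto \tr(A'X)$ on $\M(n,F)$ is non-degenerate, this forces $m_2^{-1} A m_1 = A$, i.e.\ $A m_1 = m_2 A$.

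Next, I would use the explicit form $A = E_{1,n}$ (single $1$ in position $(1,n)$) to compute both sides. The product $A m_1$ has only its first row possibly nonzero, equal to the $n$-th row of $m_1$, while $m_2 A$ has only its last column possibly nonzero, equal to the first column of $m_2$. Equating entry-by-entry yields: the last row of $m_1$ is $(0,\ldots,0,a)$, the first column of $m_2$ is $(a,0,\ldots,0)^{\top}$, with a common scalar $a = (m_1)_{n,n} = (m_2)_{1,1} \in F$, and no constraint on the remaining entries. Writing $m_1 = \begin{bmatrix} C & x \\ 0 & a \end{bmatrix}$ and $m_2 = \begin{bmatrix} a & y \\ 0 & D \end{bmatrix}$ with $C, D \in \M(n-1,F)$ and $x, y \in F^{n-1}$, invertibility $\det m_i \neq 0$ translates to $a \det C \neq 0$ and $a \det D \neq 0$, so $a \in F^{\times}$ and $C, D \in \GL(n-1,F)$, as claimed.

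Finally, the reverse inclusion is immediate: for any $m$ of the stated form, one checks directly that $A m_1 = a E_{1,n} = m_2 A$, so $m \in M_{\psi_A}$.

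There is no genuine obstacle here; the only subtle point is the reduction from the character identity to the matrix equation $A m_1 = m_2 A$, which relies on the non-degeneracy of the trace form. After that, the description of $M_{\psi_A}$ is a transparent read-off of a rank-one matrix equation.
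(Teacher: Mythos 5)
Your proof is correct and follows the same route as the paper: reduce the invariance condition to the matrix identity $Am_1=m_2A$ and read off the block form from $A=E_{1,n}$. The paper simply asserts the equivalence $m\in M_{\psi_A}\iff Ag_1=g_2A$ without justification, whereas you supply the (worthwhile) detail that it follows from non-triviality of $\psi_0$ together with non-degeneracy of the trace pairing on $\M(n,F)$.
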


\begin{proof} Let $g=\begin{bmatrix} g_1 & \\
& g_2 \end{bmatrix} \in M$. Then $ g \in M_{\psi_A}$ if and only if $ Ag_1=g_2A$. It follows that $g \in M_{\psi_A}$ if and only if $g_1=\begin{bmatrix}C & x \\
0 & a\end{bmatrix}$ and $g_2=\begin{bmatrix}
  a &  y\\
  0 & D  \end{bmatrix}$.
\end{proof}

\begin{lemma}
Let $Z$ be the center of $G=\GL(2n,F)$. Let $H_{A}$ be a subgroup of $G$ as above. Then,
\[M_{\psi_{A}} \simeq Z \times H_{A}.\]
\end{lemma}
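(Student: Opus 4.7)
The plan is to produce an explicit isomorphism
\[
\phi : Z \times H_A \longrightarrow M_{\psi_A}, \qquad \phi(z,h) = zh.
\]
Because $Z$ is central, this is automatically a group homomorphism, so the content lies in (i) checking that $\phi$ lands in $M_{\psi_A}$, (ii) injectivity, and (iii) surjectivity.

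First I would make the two sides of the claimed isomorphism completely explicit. By the previous lemma, an element of $M_{\psi_A}$ has the block form
\[
\begin{bmatrix} C & x & & \\ 0 & a & & \\ & & a & y \\ & & 0 & D \end{bmatrix},
\qquad a\in F^\times,\ C,D\in\GL(n-1,F),\ x,y\in F^{n-1}.
\]
On the other hand, $Z=\{aI_{2n}\mid a\in F^\times\}$, the mirabolic $M_1$ consists of matrices $\bigl[\begin{smallmatrix}C & x\\0 & 1\end{smallmatrix}\bigr]$, and a direct computation of $w_0 M_1^\top w_0^{-1}$ shows $M_2$ consists of matrices $\bigl[\begin{smallmatrix}1 & y\\0 & D\end{smallmatrix}\bigr]$. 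Thus a typical element of $H_A=M_1\times M_2$, embedded block-diagonally in $G$, has the form
\[
h=\begin{bmatrix} C & x & & \\ 0 & 1 & & \\ & & 1 & y \\ & & 0 & D \end{bmatrix}.
\]

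Next I would verify that $\phi(aI_{2n},h)=a\cdot h$ has exactly the shape of an element of $M_{\psi_A}$ displayed above, with the $(n,n)$ and $(n{+}1,n{+}1)$ entries both equal to $a$, and the remaining blocks $aC, aD, ax, ay$ still lying in $\GL(n-1,F)$ and $F^{n-1}$ respectively. So $\phi$ is well defined, and since $Z$ sits in the center of $G$ it is automatically a homomorphism. For injectivity, suppose $\phi(aI_{2n},h)=I_{2n}$. Reading off the $(n,n)$ entry of $aI_{2n}\cdot h$ gives $a\cdot 1=1$, whence $a=1$ and then $h=I_{2n}$; equivalently, $Z\cap H_A=\{I_{2n}\}$ because any scalar matrix in $H_A$ must have its $(n,n)$ entry equal to $1$.

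For surjectivity, given an arbitrary element of $M_{\psi_A}$ as above, I would simply factor out the scalar $a$: write it as
\[
aI_{2n}\cdot
\begin{bmatrix} a^{-1}C & a^{-1}x & & \\ 0 & 1 & & \\ & & 1 & a^{-1}y \\ & & 0 & a^{-1}D \end{bmatrix},
\]
where the second factor belongs to $H_A$ since $a^{-1}C,a^{-1}D\in\GL(n-1,F)$. This exhibits the element as $\phi(aI_{2n},h)$ and finishes the proof. No step is really an obstacle; the only thing to be careful about is matching the block structure of $H_A$ (in particular the shape of $M_2=w_0 M_1^\top w_0^{-1}$) with the block form of $M_{\psi_A}$ coming from the previous lemma.
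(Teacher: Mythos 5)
Your proposal is correct and is precisely the argument the authors have in mind; the paper itself dismisses the verification as ``Trivial,'' so your writeup is simply an honest unwinding of that claim. The key points you isolate — that $H_A$, once $M_2 = w_0 M_1^{\top} w_0^{-1}$ is computed explicitly, sits block-diagonally with $1$'s in the $(n,n)$ and $(n+1,n+1)$ slots, that $Z \cap H_A = \{I_{2n}\}$ by reading off those entries, that centrality of $Z$ makes $(z,h)\mapsto zh$ a homomorphism, and that factoring out the scalar $a$ gives surjectivity onto $M_{\psi_A}$ as described in Lemma~\ref{elements in M psi A} — are exactly the checks one needs, and there is no other natural route.
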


\begin{proof} Trivial.
\end{proof}

\begin{lemma} \label{irred rho M} Let $\rho_1=\ind_{U_1}^{M_1}\mu_1$ and  $\rho_2=\ind_{U_2}^{M_2}\mu_2$. Consider the representation $(\rho,V)$ of $M_{\psi_{A}}$ given by  
\[\rho=\theta|_{F^{\times}} \otimes \ind_{U_A}^{H_A}\mu=\theta|_{F^{\times}} \otimes (\rho_1 \otimes \rho_2). \]
Then $(\rho,V)$ is an irreducible representation of $M_{\psi_A}$.
\end{lemma}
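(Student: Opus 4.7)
The plan is to exploit the isomorphism $M_{\psi_{A}} \simeq Z \times H_{A} = Z \times M_{1} \times M_{2}$ from the preceding lemma and to view $\rho$ as an external tensor product of representations on the three direct factors. Since $U_{A}=U_{1}\times U_{2}$ and $\mu(u)=\mu_{1}(u_{1})\mu_{2}(u_{2})$, induction is compatible with external tensor products and
\[
\rho \;\simeq\; \theta|_{F^{\times}} \otimes \rho_{1} \otimes \rho_{2}
\]
as a representation of $Z \times M_{1} \times M_{2}$. Invoking the standard fact that an external tensor product of representations of a direct product of groups is irreducible if and only if each factor is irreducible, the problem reduces to checking irreducibility of $\theta|_{F^{\times}}$, $\rho_{1}$ and $\rho_{2}$ individually.

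The first factor is a character of $Z$ and is trivially irreducible. The factor $\rho_{1} = \ind_{U_{1}}^{M_{1}}\mu_{1}$ is by construction the Kirillov representation of the Mirabolic subgroup $M_{1} \subset \GL(n,F)$ associated to the non-degenerate character $\mu_{1}$, so it is irreducible by Theorem~\ref{Kirillov representation}.

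The remaining task is to handle $\rho_{2} = \ind_{U_{2}}^{M_{2}}\mu_{2}$, where $M_{2}=w_{0}M_{1}^{\top}w_{0}^{-1}$ is the ``opposite'' Mirabolic (the stabilizer of $e_{1}$ in $\GL(n,F)$) and Theorem~\ref{Kirillov representation} does not apply verbatim. I would use transport of structure via the automorphism $\sigma(g)=w_{0}(g^{\top})^{-1}w_{0}^{-1}$ of $\GL(n,F)$, which restricts to an isomorphism $M_{1}\xrightarrow{\sim} M_{2}$ and stabilizes the upper triangular unipotent subgroup. A direct calculation on the simple-root one-parameter subgroups of $U_{1}$ shows that $\mu_{2}\circ\sigma|_{U_{1}}$ remains a non-degenerate character of $U_{1}$ (in fact the one built from the non-trivial additive character $x\mapsto \psi_{0}(-x)$ of $F$). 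Hence $\rho_{2}\circ\sigma \simeq \ind_{U_{1}}^{M_{1}}(\mu_{2}\circ\sigma|_{U_{1}})$ is irreducible by Theorem~\ref{Kirillov representation}, and therefore so is $\rho_{2}$.

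I expect the main technical step to be this verification that the automorphism $\sigma$ carries $(U_{2},\mu_{2})$ to a pair of the same type on $M_{1}$. This is an elementary calculation with transposition, inversion and $w_{0}$-conjugation on the simple root subgroups, so the real content is in choosing the correct isomorphism rather than overcoming a deep obstacle.
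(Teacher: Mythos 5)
Your proof follows the same approach as the paper's: decompose $M_{\psi_A}\simeq Z\times M_1\times M_2$, reduce irreducibility of the external tensor product to irreducibility of each factor, and invoke Theorem~\ref{Kirillov representation} for $\rho_1$ and a ``similar'' argument for $\rho_2$. The paper compresses the $\rho_2$ step to a single phrase, and your explicit transport-of-structure automorphism $\sigma(g)=w_0(g^{\top})^{-1}w_0^{-1}$ is exactly the detail needed to make that phrase precise, since $M_2$ is the opposite mirabolic and the Kirillov theorem is stated only for $P_n=M_1$.
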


\begin{proof} Since $\rho_1$ is the Kirillov representation of the Mirabolic subgroup $M_1$ of $\GL(n,F)$, we have that $\rho_1$ is irreducible (see Theorem~\ref{Kirillov representation}). In a similar way, we can see that $\rho_2$ is also irreducible. Hence the result. 
\end{proof}

\begin{lemma}
Let $P_{\psi_A}=M_{\psi_A}N$. Consider the map $\tilde{\rho}:P_{\psi_A} \to \GL(V)$ given by 
\[\tilde{\rho}(p)=\tilde{\rho}(mn)=\psi_A(mn{m}^{-1})\rho(m),\]
where $m\in M_{\psi_{A}}, n\in N$. Then $(\tilde{\rho},V)$ is a representation of $P_{\psi_A}$.  
\end{lemma}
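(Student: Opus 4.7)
The plan is to verify directly that $\tilde{\rho}$ is a well-defined group homomorphism from $P_{\psi_A}$ into $\GL(V)$. First I would check that $P_{\psi_A} = M_{\psi_A} N$ is indeed a subgroup of $P$. Since $N$ is a normal subgroup of $P$, it is in particular normalized by $M_{\psi_A} \subseteq M$, so the product $M_{\psi_A} N$ is closed under multiplication and inversion. Moreover, since $M \cap N = \{1\}$ inside $P = MN$, we also have $M_{\psi_A} \cap N = \{1\}$, so every $p \in P_{\psi_A}$ has a unique factorization $p = mn$ with $m \in M_{\psi_A}$ and $n \in N$. This makes the formula defining $\tilde{\rho}$ unambiguous. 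Note also that because $m \in M_{\psi_A}$, the character value $\psi_A(mnm^{-1})$ equals $\psi_A(n)$, so an equivalent, often more convenient, description is $\tilde{\rho}(mn) = \psi_A(n)\,\rho(m)$.

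Next, I would verify the homomorphism property. Given $p_i = m_i n_i$ in $P_{\psi_A}$, writing $n_1' = m_2^{-1} n_1 m_2 \in N$, one has the normal-form decomposition
\[
p_1 p_2 \;=\; (m_1 m_2)\,(n_1' n_2),
\]
with $m_1 m_2 \in M_{\psi_A}$ and $n_1' n_2 \in N$. Applying the definition of $\tilde{\rho}$ to the left-hand side and using that $m_1 m_2 \in M_{\psi_A}$ fixes $\psi_A$ under conjugation, together with $m_2 \in M_{\psi_A}$ giving $\psi_A(n_1') = \psi_A(n_1)$, one obtains
\[
\tilde{\rho}(p_1 p_2) \;=\; \psi_A(n_1)\,\psi_A(n_2)\,\rho(m_1 m_2).
\]
On the other hand, since $m_1, m_2 \in M_{\psi_A}$, the definition of $\tilde\rho$ together with the fact that $\rho$ is a representation of $M_{\psi_A}$ gives
\[
\tilde{\rho}(p_1)\tilde{\rho}(p_2) \;=\; \psi_A(n_1)\,\rho(m_1)\,\psi_A(n_2)\,\rho(m_2) \;=\; \psi_A(n_1)\,\psi_A(n_2)\,\rho(m_1 m_2).
\]
Comparing the two expressions establishes the homomorphism property; the verification $\tilde{\rho}(1) = \mathrm{id}$ is immediate.

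There is no genuine obstacle here: the entire content of the lemma is that the character $\psi_A$ extends to a one-dimensional representation of $N$, and being fixed by $M_{\psi_A}$ under conjugation is exactly what is needed to twist the representation $\rho$ of $M_{\psi_A}$ into a representation of the semidirect-product-like group $M_{\psi_A} N$. The only point to watch is the bookkeeping in rewriting $p_1 p_2$ in the $M_{\psi_A} \cdot N$ normal form and then using the defining property of $M_{\psi_A}$ at the appropriate two places.
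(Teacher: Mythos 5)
Your proof is correct and takes essentially the same route as the paper: both rewrite $p_1 p_2 = (m_1 m_2)(m_2^{-1} n_1 m_2 \, n_2)$ and invoke the defining property of $M_{\psi_A}$ to identify $\psi_A$ of the conjugated element with $\psi_A$ of the original. Your added remarks on well-definedness (uniqueness of the $M_{\psi_A} N$ factorization) and the simplification $\tilde{\rho}(mn) = \psi_A(n)\rho(m)$ are harmless clarifications that the paper leaves implicit.
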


\begin{proof} Let $p_1=m_1n_1,p_2=m_2n_2 \in P_{\psi_A}$. Then, we have 
\begin{align*}
\tilde{\rho}(p_1p_2)&=\tilde{\rho}(m_1n_1m_2n_2)\\
&=\tilde{\rho}(m_1m_2({m_2}^{-1}n_1m_2)n_2)\\
&=\psi_A(m_1m_2({m_2}^{-1}n_1m_2n_2){m_2}^{-1}{m_1}^{-1})\rho(m_1m_2)\\
&=\psi_A(n_1(m_2n_2{m_2}^{-1}))\rho(m_1m_2)\\
&=\psi_A(n_1)\psi_A(m_2n_2{m_2}^{-1})\rho(m_1)\rho(m_2)\\
&=\psi_A(m_1{n_1}{m_1}^{-1})\rho(m_1)\psi_A(m_2n_2{m_2}^{-1})\rho(m_2)\\
&=\tilde{\rho}(p_1)\tilde{\rho}(p_2).
\end{align*}

\end{proof}

\begin{lemma}
Let $(\tilde{\rho},V)$ be the representation of $P_{\psi_A}$ given by 
\[\tilde{\rho}(p)=\tilde{\rho}(mn)=\psi_A(mn{m}^{-1})\rho(m),\]
where $m\in M_{\psi_{A}}, n\in N$. Then, 
$(\tilde{\rho},V)$ is irreducible.
\end{lemma}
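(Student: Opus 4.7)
The plan is to reduce irreducibility of $\tilde{\rho}$ to the irreducibility of $\rho$ already established in Lemma~\ref{irred rho M}, by exploiting the fact that $\tilde{\rho}$ is essentially $\rho$ extended to $P_{\psi_A}$ by letting $N$ act through the scalar character $\psi_A$.

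First I would read off the two relevant restrictions directly from the defining formula $\tilde{\rho}(mn) = \psi_A(mnm^{-1})\rho(m)$. Setting $n = 1$ gives $\tilde{\rho}(m) = \psi_A(1)\rho(m) = \rho(m)$, so $\tilde{\rho}\big|_{M_{\psi_A}} = \rho$. Setting $m = 1$ gives $\tilde{\rho}(n) = \psi_A(n)\,\Id_V$, so $N$ acts on $V$ via the character $\psi_A$, i.e. through scalars.

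Now let $W \subseteq V$ be any nonzero $P_{\psi_A}$-stable subspace. Since $N$ acts by scalars on $V$, the $N$-stability of $W$ is automatic, and the only genuine constraint is $M_{\psi_A}$-stability, which coincides with $\rho$-stability. By Lemma~\ref{irred rho M}, $(\rho,V)$ is irreducible, so $W = V$. This yields irreducibility of $(\tilde{\rho},V)$.

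The argument above is essentially book-keeping: there is no serious obstacle once one observes that the twisting character in the definition of $\tilde{\rho}$ forces $N$ to act by a character on all of $V$. The only place where a little care is warranted is in confirming that the formula $\tilde{\rho}(mn) = \psi_A(mnm^{-1})\rho(m)$ is consistent with these two specializations, but the compatibility was already verified in the preceding lemma, where $\tilde{\rho}$ was shown to be a homomorphism on $P_{\psi_A}$.
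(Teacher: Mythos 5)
Your proof is correct and follows essentially the same route as the paper's: both restrict a $P_{\psi_A}$-invariant subspace $W$ to $M_{\psi_A}$ (noting the $\psi_A$-twist is a nonzero scalar), conclude $W$ is $\rho$-invariant, and invoke the irreducibility of $\rho$ from Lemma~\ref{irred rho M}. The additional observation that $N$ acts by scalars is true but not needed for the conclusion, since $M_{\psi_A}$-invariance already follows trivially from $M_{\psi_A} \subseteq P_{\psi_A}$.
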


\begin{proof} Let $W$ be a non-trivial $P_{\psi_A}$-invariant subspace of $V$. For $w\in W, p\in P_{\psi_{A}}$, we have 
\[\tilde{\rho}(p)w=\psi_A(mnm^{-1})\rho(m)w\in W. \]
Therefore $\rho(m)w \in W$, for all $m\in M_{\psi_{A}}$, $w\in W$. Since $\rho$ is irreducible (see Lemma~\ref{irred rho M}), the result follows. 
\end{proof}

\begin{lemma} Consider the representation $\tilde{\rho}$ of $P_{\psi_{A}}$. We have  
\[\tilde{\rho}|_{U}=\psi_{A} \otimes \rho|_{U_{A}}.\]
\end{lemma}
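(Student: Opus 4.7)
The plan is to unwind the definition of $\tilde{\rho}$ after exhibiting $U$ as the semidirect product $U_A \ltimes N$, so that the identity reduces to the $M_{\psi_A}$-invariance of $\psi_A$.

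First I would verify the factorization $U = U_A N$ with $U_A \cap N = \{1\}$. Any $u \in U$ is block upper-triangular unipotent, so it can be written as
\[
u = \begin{bmatrix} u_1 & X \\ 0 & u_2 \end{bmatrix} = \begin{bmatrix} u_1 & 0 \\ 0 & u_2 \end{bmatrix}\begin{bmatrix} 1 & u_1^{-1}X \\ 0 & 1 \end{bmatrix} = u_A \cdot n,
\]
with $u_A \in U_A$ and $n \in N$. A simple count $|U| = q^{n(2n-1)} = |U_A|\cdot|N|$ together with $U_A \cap N = \{1\}$ confirms that this decomposition is unique. Since $N$ is normal in $P$ and $U_A \subset M$, this shows in particular $U \subset M_{\psi_A}N = P_{\psi_A}$ provided $U_A \subset M_{\psi_A}$, so $\tilde{\rho}|_U$ is well-defined.

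Next I would check $U_A \subset M_{\psi_A}$. Any $u_A \in U_A$ has block form as above with $u_1 = \begin{bmatrix} C & x \\ 0 & 1 \end{bmatrix}$ and $u_2 = \begin{bmatrix} 1 & y \\ 0 & D \end{bmatrix}$ for upper-triangular unipotent $C,D \in \GL(n-1,F)$. This matches the shape in Lemma~\ref{elements in M psi A} with scalar $a=1$, hence $u_A \in M_{\psi_A}$. Consequently $\rho|_{U_A}$ and $\psi_A|_{u_A N u_A^{-1}} = \psi_A$ are both defined.

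Finally, for $u = u_A n \in U$ with $u_A \in U_A$ and $n \in N$, the definition of $\tilde{\rho}$ gives
\[
\tilde{\rho}(u) = \tilde{\rho}(u_A n) = \psi_A(u_A n u_A^{-1})\,\rho(u_A).
\]
Because $u_A \in M_{\psi_A}$, by definition of $M_{\psi_A}$ we have $\psi_A^{u_A}(n) = \psi_A(u_A n u_A^{-1}) = \psi_A(n)$. Hence $\tilde{\rho}(u) = \psi_A(n)\,\rho(u_A) = (\psi_A \otimes \rho|_{U_A})(u_A n)$, which is exactly the right-hand side, interpreted via the semidirect product structure $U = U_A \ltimes N$. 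There is no real obstacle here; the entire argument is a short calculation once the decomposition $U = U_A N$ and the containment $U_A \subset M_{\psi_A}$ are in hand.
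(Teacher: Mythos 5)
Your proof is correct and follows essentially the same route as the paper: write $U = U_A N$, note that $U_A \subseteq M_{\psi_A}$ so that $\psi_A(u_A n u_A^{-1}) = \psi_A(n)$, and unwind the definition of $\tilde{\rho}$. You simply make explicit the verifications (the uniqueness of the decomposition and the containment $U_A \subseteq M_{\psi_A}$) that the paper dismisses with ``Clearly.''
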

 
\begin{proof} Clearly we have $U=U_{A}N$. Hence for $u=xn\in U$, we have 
\[\tilde{\rho}(u)=\psi_{A}(xn{x}^{-1})\rho(x)=\psi_{A}(n)\rho(x).\]
\end{proof}

\begin{lemma}\label{equality of central characters} Let $\rho=\theta|_{F^{\times}} \otimes \ind_{U_{A}}^{H_{A}}\mu$ be the representation of $M_{\psi_{A}}$ and $\tilde{\rho}$ be the corresponding representation of $P_{\psi_{A}}$. For any $z \in Z$, we have 
\[\omega_{\tilde{\rho}}(z)= \omega_{\rho}(z)=\theta(z).\]
\end{lemma}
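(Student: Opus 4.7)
The plan is to verify the two equalities separately, in each case reducing to the tensor product structure of $\rho$ via the identification $M_{\psi_{A}} \simeq Z \times H_{A}$ established in the previous lemma.

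First I would establish that $\omega_{\tilde{\rho}}(z) = \omega_{\rho}(z)$. Since $z$ lies in $Z \subseteq M_{\psi_{A}}$, I write $z = z \cdot 1$ with respect to the decomposition $P_{\psi_{A}} = M_{\psi_{A}} N$, taking the trivial element of $N$ for the unipotent part. The defining formula for $\tilde{\rho}$ then gives
\[\tilde{\rho}(z) = \psi_{A}(z \cdot 1 \cdot z^{-1})\,\rho(z) = \psi_{A}(1)\,\rho(z) = \rho(z),\]
so the two central characters agree on $Z$.

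Next I would compute $\omega_{\rho}(z)$ using the isomorphism $M_{\psi_{A}} \simeq Z \times H_{A}$. Writing $z = aI_{2n}$ for some $a \in F^{\times}$, the explicit description of $M_{\psi_{A}}$ in Lemma~\ref{elements in M psi A} shows that $z$ corresponds to $(z, 1_{H_{A}})$ under this isomorphism (it is the block-diagonal element with $C = D = aI_{n-1}$, with the middle scalar $a$ and with $x=y=0$). Since $\rho = \theta|_{F^{\times}} \otimes \ind_{U_{A}}^{H_{A}}\mu$ and $\ind_{U_{A}}^{H_{A}}\mu$ acts trivially at $1_{H_{A}}$, the operator $\rho(z)$ is the scalar $\theta|_{F^{\times}}(a)\cdot \mathrm{Id}_{V}$. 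Identifying $z$ with $a \in F^{\times} \subseteq F_{2n}^{\times}$, this scalar is exactly $\theta(z)$, which yields $\omega_{\rho}(z) = \theta(z)$.

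The only mild subtlety is making explicit the embedding $Z \hookrightarrow M_{\psi_{A}} \simeq Z \times H_{A}$, i.e.\ confirming that central matrices correspond precisely to pairs of the form $(z, 1_{H_{A}})$; beyond this bookkeeping there is no real obstacle, as the computation reduces to unwinding the definitions of $\tilde{\rho}$ and $\rho$.
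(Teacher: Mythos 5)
Your proposal is correct and takes essentially the same approach as the paper: both arguments reduce to the observation that $\tilde{\rho}(z) = \rho(z)$ for central $z$ (since the $\psi_{A}$-factor is trivial on the identity of $N$) together with the tensor-product structure of $\rho$, which forces $\rho(z)$ to act by the scalar $\theta(z)$. The only cosmetic difference is that the paper routes the argument through character values $\tr(\tilde{\rho}(z)) = \omega_{\tilde{\rho}}(z)\deg(\rho)$, whereas you compare the operators directly; your version makes explicit the step $\tilde{\rho}(z)=\rho(z)$ that the paper's chain of equalities implicitly assumes.
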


\begin{proof} For $z \in Z$, we have 
\begin{align*}
\chi_{\tilde{\rho}}(z) &= \tr(\tilde{\rho}(z)) \\
&= \omega_{\tilde{\rho}}(z)\deg(\rho) \\
&= \tr(\rho(z))\\
&= \omega_{\rho}(z)\deg(\rho)\\
&= \tr(\theta|_{F^{\times}}(z)\otimes \ind_{U_{A}}^{H_{A}}\mu(1))\\
&= \theta(z)\deg(\rho) 
\end{align*}
It follows that $\omega_{\tilde{\rho}}(z)=\omega_{\rho}(z)=\theta(z)$.
\end{proof}

\begin{lemma}
Let $\chi: F^{\times} \to \mathbb{C}^{\times}$ be a character of $F^{\times}$. Consider the representation $(\tilde{\rho},V)$ of $P_{\psi_A}$ defined above. Let 
$\sigma_{\chi}:P_{\psi_A} \to \GL(V)$ be the map  
\[\sigma_{\chi}(p)= \sigma_{\chi}(zhn)=\chi(z)\tilde{\rho}(hn),\]
where $z\in Z, h\in H_{A}, n\in N$. Then $\sigma_{\chi}$ is an irreducible representation of $P_{\psi_A}$. 
\end{lemma}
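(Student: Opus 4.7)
The plan is to recognize $\sigma_{\chi}$ as a character twist of the (already irreducible) representation $\tilde{\rho}$, so that both the homomorphism property and irreducibility fall out at once from general facts about twisting by one-dimensional characters. The key structural input is $M_{\psi_{A}} \simeq Z \times H_{A}$, which combined with $P_{\psi_{A}} = M_{\psi_{A}}N$ and $M_{\psi_{A}} \cap N = \{1\}$ gives a unique factorization $p = zhn$ for every $p \in P_{\psi_{A}}$. This ensures $\sigma_{\chi}$ is well-defined as a map.

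First I would verify that the projection $\pi_{Z} \colon P_{\psi_{A}} \to Z$ sending $zhn \mapsto z$ is a group homomorphism. This uses the centrality of $Z$ in $G$ in an essential way: for $p_{i} = z_{i}h_{i}n_{i}$, we have $p_{1}p_{2} = z_{1}z_{2}(h_{1}n_{1}h_{2}n_{2})$ with the second factor lying in the subgroup $H_{A}N \subset P_{\psi_{A}}$, so the $Z$-component of $p_{1}p_{2}$ is simply $z_{1}z_{2}$. Consequently $\eta := (\chi\,\theta^{-1}) \circ \pi_{Z}$ is a one-dimensional character of $P_{\psi_{A}}$.

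Next, appealing to Lemma~\ref{equality of central characters} (which says $\tilde{\rho}(z) = \theta(z)\,\mathrm{Id}_{V}$ for $z \in Z$), I would compute
\[\sigma_{\chi}(p) = \chi(z)\tilde{\rho}(hn) = \chi(z)\theta(z)^{-1}\tilde{\rho}(z)\tilde{\rho}(hn) = \chi(z)\theta(z)^{-1}\tilde{\rho}(p) = \eta(p)\tilde{\rho}(p).\]
This identifies $\sigma_{\chi} = \eta \otimes \tilde{\rho}$ as the twist of $\tilde{\rho}$ by the character $\eta$. Since $\tilde{\rho}$ is a representation (shown in the previous lemma) and $\eta$ is a homomorphism to $\mathbb{C}^{\times}$, the map $\sigma_{\chi}$ is a representation; and twisting by a one-dimensional character preserves the lattice of invariant subspaces, so irreducibility of $\sigma_{\chi}$ is inherited from that of $\tilde{\rho}$.

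There is essentially no hard step here---the whole point is to avoid redoing the irreducibility argument. The only mild obstacle is the bookkeeping around the unique decomposition $p = zhn$ and verifying that centrality of $Z$ forces $\pi_{Z}$ to be a homomorphism, but both are routine consequences of the structural lemmas already in place.
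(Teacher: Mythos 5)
Your argument is correct and rests on essentially the same mechanism as the paper's: both exploit the fact that $\sigma_{\chi}(p)$ and $\tilde{\rho}(p)$ differ by a nonzero scalar for each $p\in P_{\psi_A}$, so the two maps have the same invariant subspaces and irreducibility transfers from $\tilde{\rho}$. You make this more explicit by exhibiting the scalar factor as a genuine one-dimensional character $\eta=(\chi\theta^{-1})\circ\pi_{Z}$ of $P_{\psi_A}$ (using centrality of $Z$ to see $\pi_Z$ is a homomorphism) and writing $\sigma_{\chi}=\eta\otimes\tilde{\rho}$, which has the mild advantage of also supplying the homomorphism verification that the paper dismisses as ``easy to see.''
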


\begin{proof} It is easy to see that $\sigma_{\chi}$ is a representation of $P_{\psi_A}$. Let $W$ be a non-trivial subspace of $V$ invariant under $P_{\psi_A}$ and let $w\neq 0 \in W$. We have  
\[\sigma_{\chi}(zhn)w = \chi(z)\tilde{\rho}(hn)w \in W. \]
Therefore, 
\[\tilde{\rho}(zhn)w=\tilde{\rho}(z)\tilde{\rho}(hn)w=\omega_{\tilde{\rho}}(z)\tilde{\rho}(hn)w \in W.\]
Since $\tilde{\rho}$ is irreducible, it follows that $V=W$ and hence the result. 
\end{proof}

\begin{lemma}\label{sigma chi are inequivalent}
Let $\chi_1, \chi_2 \in \widehat{F^{\times}}$ such that $\chi_1 \neq \chi_2$.
Then, \[\sigma_{\chi_1} \not \simeq \sigma_{\chi_2}.\] 
\end{lemma}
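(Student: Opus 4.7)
The plan is to separate the two representations by restricting to the center $Z\subset P_{\psi_A}$ and reading off the central character. First I would verify that every element $p\in P_{\psi_A}$ admits a unique factorization $p = zhn$ with $z\in Z$, $h\in H_A$, $n\in N$. This is immediate from the parabolic decomposition $P_{\psi_A} = M_{\psi_A} N$ together with the isomorphism $M_{\psi_A}\simeq Z\times H_A$ established earlier; the latter uses $Z\cap H_A = \{I\}$, since any scalar matrix lying in a Mirabolic subgroup must have its last diagonal entry equal to $1$ and hence be the identity. Granting uniqueness, the formula $\sigma_\chi(zhn) = \chi(z)\tilde{\rho}(hn)$ is well defined, and for any $z_0\in Z$ one computes
\[\sigma_\chi(z_0) = \sigma_\chi(z_0\cdot 1\cdot 1)=\chi(z_0)\,\tilde{\rho}(1)=\chi(z_0)\,\mathrm{Id}_V.\]

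With this in hand the rest is a one-line argument. If $T\colon V\to V$ were a non-zero operator intertwining $\sigma_{\chi_1}$ and $\sigma_{\chi_2}$, then specializing the relation $T\sigma_{\chi_1}(p) = \sigma_{\chi_2}(p)T$ to $p = z\in Z$ forces $\chi_1(z)T = \chi_2(z)T$, so $\bigl(\chi_1(z)-\chi_2(z)\bigr)T = 0$ for every $z\in Z$. Since $T\neq 0$ and $Z$ is identified with $F^{\times}$ via the scalar embedding, $\chi_1 = \chi_2$ on $F^{\times}$, contradicting the hypothesis.

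There is genuinely no hard step here: the argument is simply the standard principle that equivalent representations share the same central character, applied to the scalar subgroup $Z$. The only mild care needed is checking that the twist $\sigma_\chi$ affects $Z$ alone and not the $H_A$ or $N$ pieces, which is immediate from the defining formula since for $h\in H_A$ one has $\sigma_\chi(h)=\tilde{\rho}(h)$ independently of $\chi$, and similarly $\sigma_\chi(n)=\psi_A(n)\,\mathrm{Id}_V$ for $n\in N$. Thus the only substantive computation is $\sigma_\chi|_Z = \chi\cdot\mathrm{Id}_V$, after which inequivalence is automatic.
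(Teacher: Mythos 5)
Your proposal is correct and rests on the same idea as the paper's proof, namely that $\sigma_\chi$ restricted to the center $Z$ acts by the scalar $\chi(z)$, so the two representations have distinct central characters. The paper phrases this by comparing traces at a point $z_0$ with $\chi_1(z_0)\neq\chi_2(z_0)$, whereas you phrase it via a would-be intertwining operator $T$; these are interchangeable formulations of the same argument.
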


\begin{proof}
Let $z_0 \in Z$ such that $\chi_1(z_0)\neq \chi_2(z_0)$. Let $\chi_{\sigma_{\chi_{1}}}$, $\chi_{\sigma_{\chi_{2}}}$ be the characters of $\sigma_{\chi_{1}}$ and $\sigma_{\chi_{2}}$. Suppose that $\sigma_{\chi_1} \simeq \sigma_{\chi_2}$. We have 
\begin{align*}\label{sigma(chi)} 
\chi_{\sigma_{\chi_1}}(z_0) &= \tr(\sigma_{\chi_{1}}(z_{0}))\\
&= \chi_{1}(z_{0})\deg(\rho)\\
&= \chi_{\sigma_{\chi_2}}(z_0)\\
&= \tr(\sigma_{\chi_{2}}(z_{0}))\\
&=\chi_{2}(z_{0})\deg(\rho).\\
\end{align*}  
The result follows. 
\end{proof}

\begin{lemma}\label{sigma chi occurs in the induced representation}
For $\chi \in \widehat{F^{\times}}$, we have 
\[\Hom_{P_{\psi_A}}(\sigma_{\chi}, \ind_{U}^{P_{\psi_A}}\psi) \neq 0.\]
\end{lemma}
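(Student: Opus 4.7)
The plan is to apply Frobenius reciprocity and reduce the claim to constructing an explicit Whittaker-type functional that falls out of the Kirillov realization. Since $U \subseteq P_{\psi_A}$, Frobenius reciprocity gives
\[\Hom_{P_{\psi_A}}(\sigma_{\chi}, \ind_{U}^{P_{\psi_A}}\psi) \simeq \Hom_{U}(\sigma_{\chi}|_{U}, \mathbb{C}_{\psi}),\]
so it suffices to produce a nonzero linear functional $L:V\to\mathbb{C}$ with $L(\sigma_{\chi}(u)v) = \psi(u)L(v)$ for every $u\in U$. Since $Z\cap U = \{1\}$, the restriction $\sigma_{\chi}|_{U}$ equals $\tilde{\rho}|_{U}$, and by the preceding lemma $\tilde{\rho}(xn) = \psi_{A}(n)\rho(x)$ for $x\in U_{A}$, $n\in N$. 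Combining this with the factorization $\psi(xn) = \mu(x)\psi_{A}(n)$ on $U = U_{A}N$, the $\psi_{A}$-factor cancels on both sides and the condition on $L$ reduces to $L(\rho(x)v) = \mu(x)L(v)$ for all $x\in U_{A}$, i.e. the existence of a $(U_{A},\mu)$-Whittaker functional on $\rho$.

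Next, since $U_{A}$ is unipotent, the central character $\theta|_{F^{\times}}$ is trivial on $U_{A}$, so as a representation of $U_{A} = U_{1}\times U_{2}$ one has $\rho|_{U_{A}} = \rho_{1}|_{U_{1}}\otimes \rho_{2}|_{U_{2}}$, where $\rho_{k} = \ind_{U_{k}}^{M_{k}}\mu_{k}$. A $(U_{A},\mu)$-Whittaker functional on this outer tensor product may be taken as the tensor product of $(U_{k},\mu_{k})$-Whittaker functionals on $\rho_{k}$, $k=1,2$. To construct each such functional, I would realize $\rho_{k}$ concretely on the space of functions $f\colon M_{k}\to\mathbb{C}$ satisfying $f(uh) = \mu_{k}(u)f(h)$ with the right regular action; then evaluation at the identity $L_{k}(f) = f(1)$ is nonzero and $(U_{k},\mu_{k})$-equivariant, because $L_{k}(\rho_{k}(u)f) = f(u) = \mu_{k}(u)f(1) = \mu_{k}(u)L_{k}(f)$. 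Setting $L = L_{1}\otimes L_{2}$ and tracing the reductions backward yields a nonzero element of the Hom space.

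The only step requiring care is the first: verifying that the reduction from $(U,\psi)$-equivariance to $(U_{A},\mu)$-equivariance is clean. This rests on the compatibility $\psi_{A}(xnx^{-1}) = \psi_{A}(n)$ for $x\in U_{A}$, $n\in N$, which is already built into the earlier description of $\tilde{\rho}|_{U}$ and ultimately follows from the identities $Au_{1} = A = u_{2}A$ for upper-unipotent $u_{1},u_{2}$ when $A = E_{1,n}$. Once this compatibility is recorded, the rest is a formal consequence of the Kirillov realization and the fact that $\sigma_{\chi}$ differs from $\tilde{\rho}$ only by a twist concentrated on $Z$, which plays no role in the restriction to $U$.
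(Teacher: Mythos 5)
Your argument is correct and reaches the same Hom-space nonvanishing, but it does so by a genuinely different (and more concrete) route than the paper. Both proofs begin with Frobenius reciprocity, reduce to $\Hom_{U}(\tilde{\rho}|_{U},\psi)$ via $\sigma_{\chi}|_{U}=\tilde{\rho}|_{U}$, and then use the factorization $\tilde{\rho}|_{U}=\psi_{A}\otimes\rho|_{U_{A}}$ to strip the $N$-part and land on the problem of producing a $(U_{A},\mu)$-equivariant functional on $\rho|_{U_{A}}=\rho_{1}|_{U_{1}}\otimes\rho_{2}|_{U_{2}}$. At that point the paper applies the Mackey decomposition to $\ind_{U_{A}}^{H_{A}}\mu$ restricted to $U_{A}$, observes that the trivial double coset contributes $\mu$ itself as a direct summand, and concludes $\Hom_{U}(\psi,\psi)\neq 0$; you instead construct the Whittaker functional by hand, realizing each $\rho_{k}$ in its induced model and taking evaluation at the identity $L_{k}(f)=f(1)$, then tensoring $L=L_{1}\otimes L_{2}$. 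These are two ways of exhibiting the same thing — your evaluation functional $L$ is precisely the one supported on the trivial $U_{A}$-double coset in the Mackey picture — so the proofs are related but not identical in mechanics. Your construction has the virtue of producing an explicit nonzero element of the Hom space rather than merely detecting one summand in a direct sum; the paper's Mackey route is slightly more structural and would generalize more readily if one wanted to know the \emph{full} multiplicity rather than just nonvanishing. Both are valid, and your closing remark correctly isolates the one nontrivial compatibility, namely that $\psi_{A}$ is $U_{A}$-conjugation invariant on $N$, which follows from $Au_{1}=A=u_{2}A$ for $A=E_{1,n}$.
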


\begin{proof} Using Fr\"{o}benius Reciprocity, we have 
\[\Hom_{P_{\psi_A}}(  \sigma_{\chi}, \ind_{U}^{P_{\psi_A}}\psi)=\Hom_{U}(\sigma_{\chi}|_{U}, \psi). \]
Thus it is enough to show that $\Hom_{U}(\sigma_{\chi}|_{U}, \psi)\neq 0$. For $u \in U$, we have 
\[\sigma_{\chi}|_{U}(u)=\sigma_{\chi}(u)=\chi(1)\tilde{\rho}(u)=\tilde{\rho}|_U(u).\]
Therefore,
\begin{align*}
\Hom_{U}(\sigma_{\chi}|_U, \psi)&=\Hom_U(\tilde{\rho}|_U, \psi)\\\\
&= \Hom_U(\psi_A \otimes \rho|_{U_A},\psi)\\\\
&= \Hom_{U}\left(\psi_A \otimes \bigoplus_{s \in U_A \setminus H_A / U_A}\ind_{s^{-1}U_{A}s \cap U_A}^{U_A}\mu^{s}, \psi\right)\\\\
&= \Hom_{U}(\psi_A \otimes \mu, \psi) \oplus \bigoplus_{1\neq s \in U_A \setminus H_A / U_A} \Hom_{U}\left(\psi_{A}\otimes \ind_{s^{-1}U_{A}s \cap U_A}^{U_A}\mu^{s}, \psi\right )\\\\
&= \Hom_{U}(\psi, \psi) \oplus \bigoplus_{1\neq s \in U_A \setminus H_A / U_A} \Hom_{U}\left(\psi_{A}\otimes \ind_{s^{-1}U_{A}s \cap U_A}^{U_A}\mu^{s}, \psi\right )\\\\
&\neq 0.
\end{align*}
\end{proof}

\begin{lemma}\label{Explanation of the induced representation} Let $\chi\in \widehat{F^{\times}}$ and $\sigma_{\chi}$ be the irreducible representation of $P_{\psi_{A}}$. Then  
\[\ind_{U}^{P_{\psi_A}}\psi= \bigoplus_{\chi \in \widehat{F^{\times}}}\sigma_{\chi}.\]
\end{lemma}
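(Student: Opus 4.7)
The plan is to realize the equality via a dimension count after first showing that $\bigoplus_{\chi} \sigma_{\chi}$ embeds into $\ind_{U}^{P_{\psi_{A}}}\psi$ as a subrepresentation. Once both of these are in hand, the desired decomposition follows automatically.

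First I would assemble the embedding. Each $\sigma_{\chi}$ is irreducible (shown in the lemma preceding Lemma~\ref{sigma chi are inequivalent}), the family $\{\sigma_{\chi} : \chi \in \widehat{F^{\times}}\}$ is pairwise inequivalent by Lemma~\ref{sigma chi are inequivalent}, and each $\sigma_{\chi}$ appears in $\ind_{U}^{P_{\psi_{A}}}\psi$ by Lemma~\ref{sigma chi occurs in the induced representation}. Since we are working with finite groups over $\mathbb{C}$, all representations are semisimple, so pairwise inequivalent irreducibles appearing individually force their external direct sum to embed with multiplicity one each. This gives a canonical injection
\[
\bigoplus_{\chi \in \widehat{F^{\times}}} \sigma_{\chi} \hookrightarrow \ind_{U}^{P_{\psi_{A}}}\psi.
\]

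Next I would verify that the two sides have the same dimension, after which the embedding must be an isomorphism. On the induced side, since $N$ is normal in $P_{\psi_{A}}$ and we can decompose $U = U_{A} \cdot N$ and $P_{\psi_{A}} = M_{\psi_{A}} \cdot N$, we have $\dim \ind_{U}^{P_{\psi_{A}}}\psi = [P_{\psi_{A}}:U] = |M_{\psi_{A}}|/|U_{A}|$. Using $M_{\psi_{A}} \simeq Z \times H_{A}$ with $H_{A} = M_{1}\times M_{2}$ and $|M_{i}| = q^{n-1}|\GL(n-1,F)|$, together with $|U_{A}| = q^{n(n-1)}$, this quotient evaluates to $(q-1)(q^{n-1}|\GL(n-1,F)|)^{2}/q^{n(n-1)}$. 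On the other side, $\dim \sigma_{\chi} = \dim \rho = \dim \rho_{1}\cdot \dim \rho_{2} = ([M_{1}:U_{1}])^{2} = (q^{n-1}|\GL(n-1,F)|/q^{n(n-1)/2})^{2}$, and summing over the $q-1$ characters $\chi \in \widehat{F^{\times}}$ yields exactly the same expression.

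The main obstacle is really not conceptual but is to correctly account for the indices and for the fact that the multiplicity of each $\sigma_{\chi}$ is at least one rather than merely that $\Hom \neq 0$; as noted above, this follows immediately from the semisimplicity of the ambient finite group representation theory combined with irreducibility of $\sigma_{\chi}$. The bookkeeping of orders of Mirabolic subgroups, unipotent subgroups, and the center is straightforward. Once the injection in the first paragraph is paired with the matching dimensions in the second, the lemma is proved.
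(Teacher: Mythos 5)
Your proposal is correct and follows essentially the same path as the paper: both cite Lemma~\ref{sigma chi are inequivalent} and Lemma~\ref{sigma chi occurs in the induced representation} to force all $q-1$ pairwise inequivalent irreducibles $\sigma_{\chi}$ into $\ind_{U}^{P_{\psi_A}}\psi$ with multiplicity at least one, then close the argument by comparing dimensions with $\deg\bigl(\ind_{U}^{P_{\psi_A}}\psi\bigr)=(q-1)\deg(\rho)$. The only difference is that you unwind the identity $[P_{\psi_A}:U]=|M_{\psi_A}|/|U_A|=(q-1)\deg(\rho)$ explicitly in terms of Mirabolic and unipotent orders, whereas the paper states the degree equality directly; the logic is identical.
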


\begin{proof} The result clearly follows from a simple application of Lemma~\ref{sigma chi are inequivalent} and Lemma~\ref{sigma chi occurs in the induced representation}, and computing the degree of $\ind_{U}^{P_{\psi_A}}(\psi)$. To be precise, suppose that 
\[\ind_{U}^{P_{\psi_A}}(\psi)=(\bigoplus_{x \in \widehat{F^{\times}}}d_{\chi}\sigma_{\chi}) \oplus d\sigma\]
where $d_{\chi} \geq 1, d \geq 0$ and $\sigma$ is some representation of $P_{\psi_A}$. By degree comparison, we have that 
\[\deg\big(\bigoplus_{\chi \in \widehat{F^{\times}}}d_{\chi}\sigma_{\chi}\big) =\sum_{\chi \in \widehat{F^{\times}}}d_{\chi}\deg(\sigma_{\chi})=\sum_{\chi \in \widehat{F^{\times}}}d_{\chi}\deg(\rho) 
\]
Clearly \[\sum_{\chi \in \widehat{F^{\times}}}d_{\chi}\deg(\rho)  \geq (q-1)\deg(\rho)=\deg(\ind_{U}^{P_{\psi_A}}(\psi)),\]
On the other hand, we have 
\[\deg(\bigoplus_{\chi \in \widehat{F^{\times}}}d_{\chi}\sigma_{\chi})+d\deg(\sigma) = \deg(\ind_{U}^{P_{\psi_A}}(\psi)).\]
It follows that 
\[d=0,d_{\chi}=1, \forall \chi \in F^{\times}.\]
Hence the result.
\end{proof}

\begin{lemma} Let $m=ah \in M_{\psi_{A}}$, where $a\in Z$ and $h\in H_{A}$. Then,
\[\Theta_{N,\psi_A}(m)=\theta(a)\Theta_{N,\psi_A}(h).\]
\end{lemma}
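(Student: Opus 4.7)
The plan is to apply the character formula for the twisted Jacquet module directly, factor out the action of the central element $a$ via the central character of $\pi_\theta$, and then use what is left. By the proposition in Section~2.4, we have
\[\Theta_{N,\psi_A}(ah) = \frac{1}{|N|}\sum_{n \in N}\Theta_\theta(ahn)\overline{\psi_A(n)},\]
so the statement reduces to the pointwise identity $\Theta_\theta(ahn) = \theta(a)\,\Theta_\theta(hn)$ for every $n\in N$. Since $a\in Z$ is a scalar matrix in $\GL(2n,F)$, Schur's lemma gives $\pi_\theta(a) = \omega_{\pi_\theta}(a)\cdot \Id$, and therefore $\Theta_\theta(ahn) = \omega_{\pi_\theta}(a)\,\Theta_\theta(hn)$. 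Thus the entire question reduces to identifying the central character as $\omega_{\pi_\theta}(a) = \theta(a)$.

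For this identification I would specialize Theorem~\ref{character value of cuspidal representation (Dipendra)} to $a = zI_{2n}$ with $z\in F^\times$. Its Jordan decomposition is $a = a\cdot 1$, the eigenvalue $z$ already lies in $F$, so the field generated by $z$ over $F$ is $F$ itself (hence $q^d = q$), the sum over Galois conjugates collapses to the single term $\theta(z)$, and $t = 2n$. The theorem then gives
\[\Theta_\theta(zI_{2n}) = (-1)^{2n-1}\theta(z)(1-q)(1-q^2)\cdots(1-q^{2n-1}) = \theta(z)\dim(\pi_\theta),\]
which confirms $\omega_{\pi_\theta}(a) = \theta(z)$; under the convention in the paper this is exactly $\theta(a)$.

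Substituting back and pulling $\theta(a)$ out of the sum gives
\[\Theta_{N,\psi_A}(ah) = \theta(a)\cdot \frac{1}{|N|}\sum_{n\in N}\Theta_\theta(hn)\overline{\psi_A(n)} = \theta(a)\,\Theta_{N,\psi_A}(h),\]
as claimed. There is no serious obstacle here; the only step that requires a moment's thought is the central character computation via Theorem~\ref{character value of cuspidal representation (Dipendra)}, after which the identity follows by a one-line substitution into the character formula.
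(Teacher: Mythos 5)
Your proposal is correct and follows essentially the same route as the paper's own proof: apply the twisted Jacquet character formula, pull out the central character $\omega_{\pi_\theta}(a)$ using the fact that $a$ is a scalar matrix, and then identify $\omega_{\pi_\theta}(a)=\theta(a)$ by evaluating $\Theta_\theta$ at a central element via Theorem~\ref{character value of cuspidal representation (Dipendra)}. Your additional explicit check that $(-1)^{2n-1}\theta(z)(1-q)\cdots(1-q^{2n-1})=\theta(z)\dim(\pi_\theta)$ is a nice verification of a step the paper dispatches with ``it is easy to see.''
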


\begin{proof} We have
\begin{align*}
\Theta_{N,\psi_{A}}(m) &=\Theta_{N,\psi_{A}}(ah)\\
&= \frac{1}{|N|}\sum_{n \in N}\Theta_{\theta}(ahn)\overline{\psi_{A}(n)}\\
&=\frac{1}{|N|}\sum_{n \in N}\tr(\pi(ahn)\overline{\psi_A(n)}\\
&=\frac{1}{|N|}\sum_{n \in N}\tr(\pi(a)\pi(hn)\overline{\psi_A(n)}\\
&=\omega_{\pi}(a)\frac{1}{|N|}\sum_{n \in N}\tr(\pi(hn)\overline{\psi_A(n)}\\
&=\omega_{\pi}(a)\Theta_{N,\psi_A}(h)
\end{align*}
where $\omega_{\pi}$ is the central character of $\pi$. Explicitly, we have \[\Theta_{\theta}(a)=\tr(\pi(a))=\tr(\omega_{\pi}(a))=\omega_{\pi}(a)\dim(\pi).\]
Using Theorem \ref{character value of cuspidal representation (Dipendra)}, it is easy to see that $$\Theta_{\theta}(a)=\theta(a)\dim(\pi).$$
Thus, we have $\omega_{\pi}(a)=\theta(a)$ and the result follows. 
\end{proof}

\begin{lemma} Let $\chi\neq \theta \in \widehat{F^{\times}}$. Then 
\[\Hom_{P_{\psi_A}}(\pi|_{P_{\psi_A}},\sigma_{\chi})=0.\]
\end{lemma}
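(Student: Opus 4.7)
The plan is to invoke Schur's lemma on the center $Z$ of $G$, using the fact that $Z$ sits inside $P_{\psi_A}$ (via $Z \subset M_{\psi_A}$) and that $\pi|_Z$ and $\sigma_\chi|_Z$ are both scalar. The assertion $\chi \neq \theta$ must be read as an inequality of characters of $F^\times$, i.e.\ $\chi \neq \theta|_{F^\times}$, and the whole argument comes down to noting that the central characters of $\pi|_{P_{\psi_A}}$ and $\sigma_\chi$ differ.

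First I would record that $\pi(z) = \theta(z) \cdot \Id$ for every $z \in Z$. This is exactly what the preceding lemma establishes: $\omega_\pi(z) = \theta(z)$ via the Jordan-decomposition character formula of Theorem~\ref{character value of cuspidal representation (Dipendra)}. Next, from the definition $\sigma_\chi(zhn) = \chi(z)\tilde{\rho}(hn)$, setting $h = 1, n = 1$ gives $\sigma_\chi(z) = \chi(z) \cdot \Id$ for $z \in Z$. So on $Z$ the two representations act by the scalars $\theta|_{F^\times}$ and $\chi$ respectively.

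Now let $T \in \Hom_{P_{\psi_A}}(\pi|_{P_{\psi_A}}, \sigma_\chi)$. For any $z \in Z$ and any $v$ in the space of $\pi$,
\[
\chi(z) T(v) = \sigma_\chi(z) T(v) = T(\pi(z) v) = \theta(z) T(v),
\]
so $(\chi(z) - \theta(z)) T = 0$. Since $\chi \neq \theta|_{F^\times}$ as characters of $F^\times \simeq Z$, there exists $z_0 \in Z$ with $\chi(z_0) \neq \theta(z_0)$, forcing $T = 0$. There is no real obstacle here; the content was all packed into the earlier lemmas identifying the two central characters, and this lemma is just the Schur-type consequence that will be used in the final assembly of the main theorem.
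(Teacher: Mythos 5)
Your proof is correct, but it takes a different path from the paper's. The paper computes $\dim_{\mathbb{C}}\Hom_{P_{\psi_A}}(\pi|_{P_{\psi_A}},\sigma_{\chi})$ as a character inner product, decomposes the sum over $P_{\psi_A}$ according to the factorization $P_{\psi_A}=Z\cdot(H_A N)$, and observes that the $Z$-part of the sum is (up to normalization) $\langle\theta,\chi\rangle_{F^{\times}}$, which vanishes by orthogonality of distinct characters; the remaining sum over $H_A N$ is then irrelevant. You instead argue at the level of intertwiners: since $\pi$ is irreducible its restriction to the center $Z$ acts by the scalar $\theta(z)$ (as the preceding lemma establishes), while $\sigma_{\chi}$ acts on $Z$ by $\chi(z)$ by construction, and so any $P_{\psi_A}$-intertwiner $T$ satisfies $(\theta(z)-\chi(z))T=0$ for all $z$, forcing $T=0$ once a single $z_0$ with $\chi(z_0)\neq\theta(z_0)$ is chosen. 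Both proofs rest on the same two facts --- the identification $\omega_{\pi}|_{Z}=\theta|_{F^{\times}}$ and the fact that $\sigma_{\chi}$ has central character $\chi$ --- but yours avoids the character-sum bookkeeping entirely and is the cleaner argument; the paper's version has the minor advantage of being uniform with the other character computations in the section.
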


\begin{proof} It is enough to show that $\dim_{\mathbb{C}}\Hom_{P_{\psi_A}}(\pi|_{P_{\psi_A}},\sigma_{\chi})=0$. Clearly, we have 
\begin{align*} 
\dim_{\mathbb{C}}\Hom_{P_{\psi_A}}(\pi|_{P_{\psi_A}},\sigma_{\chi}) &= \langle \chi_{ \pi|_{P_{\psi_A}}}, \chi_{\sigma_{\chi}} \rangle \\
&=\sum_{zhn \in P_{\psi_A}}\chi_{\pi}(zhn)\overline{\chi_{\sigma_{\chi}}(zhn)}\\
&=\sum_{hn \in H_AN}\sum_{z \in Z}\omega_{\pi}(z)\chi_{\pi}(hn)\overline{\chi(z)} \overline{\chi_{\tilde{\rho}}(hn)}\\
&=\sum_{hn \in H_AN}\sum_{z \in Z}\theta(z)\overline{\chi(z)}\chi_{\pi}(hn)\overline{\chi_{\tilde{\rho}}(hn)}\\
&=\sum_{z \in Z}\theta(z)\overline{\chi(z)} \sum_{hn \in H_AN}\chi_{\pi}(hn)\overline{\chi_{\tilde{\rho}}(hn)}\\
&=\langle \theta, \chi \rangle \sum_{hn \in H_AN}\chi_{\pi}(hn)\overline{\chi_{\tilde{\rho}}(hn)}\\
&=0
\end{align*}
It follows that 
\[\Hom_{P_{\psi_A}}(\pi|_{P_{\psi_A}},\sigma_{\chi})={0} ,\forall \chi \in \widehat{F^{\times}}, \chi \neq \theta.\]
\end{proof}

\begin{lemma} Consider the restriction $\theta|_{F^{\times}}$ of the regular character $\theta$. Then 
\[\sigma_{\theta}= \tilde{\rho}\]
as $P_{\psi_{A}}$ representations. 
\end{lemma}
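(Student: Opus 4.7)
The plan is to verify the equality $\sigma_\theta(p) = \tilde{\rho}(p)$ directly for every $p \in P_{\psi_A}$, by unraveling the two definitions and using centrality of $Z$. Using the earlier lemma $M_{\psi_A} \simeq Z \times H_A$ together with $P_{\psi_A} = M_{\psi_A} N$, I would first write an arbitrary element as $p = zhn$ with $z \in Z$, $h \in H_A$, and $n \in N$.

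Unpacking the right-hand side from the definition of $\sigma_\chi$ with $\chi = \theta$, I get
\[\sigma_\theta(zhn) = \theta(z)\,\tilde{\rho}(hn) = \theta(z)\,\psi_A(hnh^{-1})\,\rho(h).\]
For the left-hand side, taking $m = zh \in M_{\psi_A}$ in the definition of $\tilde{\rho}$ gives
\[\tilde{\rho}(zhn) = \psi_A\bigl((zh)\,n\,(zh)^{-1}\bigr)\,\rho(zh).\]

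The computation then rests on two simple observations. First, since $z$ lies in the center of $G$, one has $(zh)n(zh)^{-1} = hnh^{-1}$, so $\psi_A((zh)n(zh)^{-1}) = \psi_A(hnh^{-1})$. Second, under the product decomposition $M_{\psi_A} \simeq Z \times H_A$, the representation $\rho = \theta|_{F^\times} \otimes \ind_{U_A}^{H_A}\mu$ is an external tensor product, so $\rho(zh) = \theta(z)\,\rho(h)$. Combining these equalities immediately yields $\tilde{\rho}(zhn) = \theta(z)\,\psi_A(hnh^{-1})\,\rho(h) = \sigma_\theta(zhn)$, which forces $\tilde{\rho} = \sigma_\theta$ as representations of $P_{\psi_A}$.

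I do not anticipate any real difficulty here; the statement is essentially a bookkeeping check whose only genuine content is that $\theta|_{F^\times}$ is precisely the central character of $\rho$ (as made explicit in Lemma~\ref{equality of central characters}). The same computation with $\chi$ in place of $\theta$ would fail exactly when $\chi \neq \theta$, which dovetails with the previous lemma identifying $\chi = \theta$ as the unique character for which $\sigma_\chi$ can occur in $\pi|_{P_{\psi_A}}$.
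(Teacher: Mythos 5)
Your proposal is correct and takes essentially the same route as the paper: both arguments decompose $p = zhn$ with $z\in Z$, $h\in H_A$, $n\in N$, and both reduce to the observation that $\rho$ acts on $Z$ by the scalar $\theta(z)$ (the content of Lemma~\ref{equality of central characters}, which you reproduce by reading $\rho(zh)=\theta(z)\rho(h)$ off the external tensor product structure). The paper phrases the final step via the central character $\omega_{\tilde{\rho}}$ while you unwind $\tilde{\rho}(zhn)$ explicitly and cancel $\psi_A((zh)n(zh)^{-1})=\psi_A(hnh^{-1})$ using centrality of $z$, but these are the same computation.
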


\begin{proof} Using Lemma~\ref{equality of central characters} we have $\omega_{\tilde{\rho}}(z)=\theta(z)$. Thus for $p=zhn\in P_{\psi_{A}}$, we have 
\begin{align*}
\sigma_{\theta}(zhn)& =\theta(z)\rho(hn)\\
&= \omega_{\tilde{\rho}}(z)\tilde{\rho}(hn)\\
&= \tilde{\rho}(zhn).\\
\end{align*}
\end{proof}


\subsection{Proof of the Main Theorem}

For the sake of completeness, we recall the statement below. 
 
\begin{theorem}
Let $\theta$ be a regular character of $F_{2n}^{\times}$ and $\pi=\pi_{\theta}$ be an irreducible cuspidal representation of $G$. Then 
\[\pi_{N,\psi_A} \simeq \theta|_{F^{\times}} \otimes \ind_{U_{A}}^{H_{A}}\mu\]
as $M_{\psi_A}$ modules.
\end{theorem}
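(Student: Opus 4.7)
The strategy is to combine the dimension calculation from Theorem~\ref{dimension calculation} with the preparatory lemmas that have already decomposed the relevant induced representation of $P_{\psi_A}$ and identified $\sigma_\theta$ with $\tilde\rho$. The idea is to produce a nonzero $M_{\psi_A}$-equivariant map $\rho \hookrightarrow \pi_{N,\psi_A}$ and then match dimensions.

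First, since $\pi$ is irreducible and cuspidal (hence generic), Frobenius reciprocity gives
\[
\Hom_{P_{\psi_A}}\!\bigl(\pi|_{P_{\psi_A}},\,\ind_{U}^{P_{\psi_A}}\psi\bigr)\;\simeq\;\Hom_{U}(\pi|_{U},\psi)\neq 0,
\]
using that $\psi$ is the non-degenerate Whittaker character of $U$. Invoking Lemma~\ref{Explanation of the induced representation}, which decomposes $\ind_{U}^{P_{\psi_A}}\psi=\bigoplus_{\chi\in\widehat{F^\times}}\sigma_\chi$, together with the vanishing $\Hom_{P_{\psi_A}}(\pi|_{P_{\psi_A}},\sigma_\chi)=0$ for $\chi\neq\theta$, forces
\[
\Hom_{P_{\psi_A}}\!\bigl(\pi|_{P_{\psi_A}},\,\sigma_\theta\bigr)\neq 0.
\]

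Next, I will translate this into a statement about the twisted Jacquet module. By the last lemma, $\sigma_\theta=\tilde\rho$, and $\tilde\rho$ is by construction the extension of $\rho=\theta|_{F^\times}\otimes \ind_{U_A}^{H_A}\mu$ to $P_{\psi_A}$ on which $N$ acts through $\psi_A$. The universal property of the twisted Jacquet module then yields the natural identification
\[
\Hom_{P_{\psi_A}}(\pi|_{P_{\psi_A}},\tilde\rho)\;\simeq\;\Hom_{M_{\psi_A}}(\pi_{N,\psi_A},\rho),
\]
so the previous paragraph gives a nonzero $M_{\psi_A}$-equivariant map $\pi_{N,\psi_A}\to\rho$. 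Since $\rho$ is irreducible (Lemma~\ref{irred rho M}) and the representation theory of the finite group $M_{\psi_A}$ is semisimple, this map is surjective and $\rho$ appears as a quotient (equivalently, a direct summand) of $\pi_{N,\psi_A}$.

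Finally, I will finish by a dimension count. A direct calculation gives $\dim\rho=[M_1:U_1]\cdot[M_2:U_2]=(q;q)_{n-1}^{2}$, which coincides exactly with $\dim_{\mathbb{C}}\pi_{N,\psi_A}$ computed in Theorem~\ref{dimension calculation}. Combined with the surjection $\pi_{N,\psi_A}\twoheadrightarrow\rho$, this forces the isomorphism $\pi_{N,\psi_A}\simeq\theta|_{F^\times}\otimes\ind_{U_A}^{H_A}\mu$ of $M_{\psi_A}$-modules.

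The conceptual work has already been carried out in the preliminary lemmas: identifying the centralizer-type subgroup $M_{\psi_A}$, constructing the family $\{\sigma_\chi\}$, proving their pairwise inequivalence, decomposing $\ind_{U}^{P_{\psi_A}}\psi$, and isolating $\sigma_\theta$ by the central-character argument. The only subtle point in the final assembly is the semisimplicity/Frobenius-reciprocity identification of $\Hom_{P_{\psi_A}}(\pi,\tilde\rho)$ with $\Hom_{M_{\psi_A}}(\pi_{N,\psi_A},\rho)$; once this is in place, the theorem is immediate from the matching of dimensions.
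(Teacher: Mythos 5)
Your proof is correct and takes essentially the same route as the paper: both pass through the decomposition of $\ind_{U}^{P_{\psi_A}}\psi$ into the $\sigma_{\chi}$'s (Lemma~\ref{Explanation of the induced representation}), the vanishing of $\Hom_{P_{\psi_A}}(\pi|_{P_{\psi_A}},\sigma_{\chi})$ for $\chi\neq\theta$, the identification $\sigma_{\theta}=\tilde{\rho}$, and the Frobenius-reciprocity chain relating $\Hom_{P_{\psi_A}}(\pi|_{P_{\psi_A}},\tilde{\rho})$ to $\Hom_{M_{\psi_A}}(\pi_{N,\psi_A},\rho)$. The one point where you are slightly more careful than the paper is the final step: you explicitly invoke the dimension equality $\dim_{\mathbb{C}}\pi_{N,\psi_A}=(q;q)_{n-1}^{2}=\dim_{\mathbb{C}}\rho$ from Theorem~\ref{dimension calculation} to promote a nonzero (hence surjective) map $\pi_{N,\psi_A}\twoheadrightarrow\rho$ to an isomorphism, whereas the paper passes from $\dim_{\mathbb{C}}\Hom_{M_{\psi_A}}(\pi_{N,\psi_A},\rho)=1$ directly to $\pi_{N,\psi_A}\simeq\rho$ without re-citing the dimension theorem, even though multiplicity one by itself only says that $\rho$ occurs exactly once as a constituent.
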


\begin{proof}
Using transitivity of induction and Lemma~\ref{Explanation of the induced representation}, we have that
\begin{align*}
\Hom_{G}(\pi, \ind_{U}^{G}\psi)&=\Hom_{G}(\pi, \ind_{P_{\psi_A}}^{G}(\ind_{U}^{P_{\psi_A}}\psi))\\
&=\Hom_G(\pi, \ind_{P_{\psi_A}}^G(\bigoplus_{\chi \in \widehat{F^{\times}}}\sigma_{\chi}))\\
&=\bigoplus_{\chi \in \widehat{F^{\times}}}\Hom_G(\pi, \ind_{P_{\psi_A}}^G \sigma_{\chi})\\
&=\bigoplus_{\chi \in \widehat{F^{\times}}}\Hom_{P_{\psi_A}}(\pi|_{P_{\psi_A}}, \sigma_{\chi})\\
&=\Hom_{P_{\psi_A}}(\pi|_{P_{\psi_A}}, \sigma_{\theta})\oplus \bigoplus_{\theta\neq \chi \in \widehat{F^{\times}}}\Hom_{P_{\psi_A}}(\pi|_{P_{\psi_A}}, \sigma_{\chi})\\
&=\Hom_{P_{\psi_A}}(\pi|_{P_{\psi_A}}, \tilde{\rho})
\end{align*}
Hence, 
\[ 
\Hom_{G}(\pi, \ind_{U}^{G}(\psi))=\Hom_{P_{\psi_A}}(\pi|_{P_{\psi_A}}, \tilde{\rho}) \simeq \Hom_{G}(\pi,\ind_{P_{\psi_A}}^{G}\tilde{\rho})\simeq \Hom_{M_{\psi_A}}(\pi_{N,\psi_A},\rho).
\]
Using the multiplicity one theorem for $\GL(n)$ (give precise statement in preliminaries), we conclude that
\[\dim_{\mathbb{C}}\Hom_{M_{\psi_A}}(\pi_{N,\psi_A},\rho)=1\]
and it follows that $$\pi_{N,\psi_{A}}\simeq \rho$$
as $M_{\psi_{A}}$ representations.  
\end{proof}

\section*{Acknowledgements}

We thank Professor Dipendra Prasad for suggesting this problem and for some helpful discussions. Research of Kumar Balasubramanian is supported by the SERB grant: MTR/2019/000358.

\bibliographystyle{amsplain}
\bibliography{Twisted-Jacquet-Module}
\end{document}